\newtheorem{theorem}{Theorem}[section]
\newtheorem{lemma}[theorem]{Lemma}
\theoremstyle{definition}
\newtheorem{definition}[theorem]{Definition}
\newtheorem{example}[theorem]{Example}
\newtheorem{proposition}[theorem]{Proposition}
\newtheorem{corollary}[theorem]{Corollary}
\newtheorem{remark}[theorem]{Remark}
\newtheorem{conjecture}[theorem]{Conjecture}
\theoremstyle{remark}
\newcommand{\be}{\begin{equation}}
\newcommand{\ee}{\end{equation}}
\numberwithin{equation}{section}
\begin{document}
\title{K\"{a}hler hyperbolic manifolds and Chern number inequalities}
\author{Ping Li}
\address{School of Mathematical Sciences, Tongji University, Shanghai 200092, China}
\email{pingli@tongji.edu.cn\\
pinglimath@gmail.com}
\thanks{The author was partially supported by the National
Natural Science Foundation of China (Grant No. 11722109).}

 \subjclass[2010]{32Q45, 57R20, 58J20.}

\keywords{K\"{a}hler hyperbolic manifold, K\"{a}hler non-elliptic manifold, Chern number inequality, Hirzebruch $\chi_y$-genus, Hirzebruch's proportionality principle, negative sectional curvature, non-positive sectional curvature, $L^2$-Hodge number, Kobayashi hyperbolicity.}

\begin{abstract}
We show in this article that K\"{a}hler hyperbolic manifolds satisfy a family of optimal Chern number inequalities and the equality cases can be attained by some compact ball quotients.
These present restrictions to complex structures on negatively-curved compact K\"{a}hler manifolds, thus providing evidence to the rigidity conjecture of S.-T. Yau. The main ingredients in our proof are Gromov's results on the $L^2$-Hodge numbers, the $-1$-phenomenon of the $\chi_y$-genus and Hirzebruch's proportionality principle. Similar methods can be applied to obtain parallel results on K\"{a}hler non-elliptic manifolds. In addition to these, we term a condition called ``K\"{a}hler exactness", which includes K\"{a}hler hyperbolic and non-elliptic manifolds and has been used by B.-L. Chen and X. Yang in their work, and show that the canonical bundle of a K\"{a}hler exact manifold of general type is ample. Some of its consequences and remarks are discussed as well.
\end{abstract}

\maketitle

\section{Introduction}
Let us start the article by recalling two well-known conjectures related to the negativity of Riemannian sectional curvature, and their connections via the notion of ``K\"{a}hler hyperbolicity" introduced by Gromov (\cite{Gr}). The first one, usually attributed to Hopf, is
\begin{conjecture}[Hopf]\label{hopfconjecture}
The Euler characteristic $\chi(M)$ of a compact $2n$-dimensional Riemannian manifold $M$ with sectional curvature $K<0$ (resp. $K\leq 0$) satisfies $(-1)^n\chi(M)>0$ (resp. $(-1)^n\chi(M)\geq0$).
\end{conjecture}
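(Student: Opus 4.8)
The plan is to route the problem through the Chern--Gauss--Bonnet theorem and then to control the sign of the resulting curvature integrand. In a local orthonormal frame let $\Omega=(\Omega_{ij})$ denote the curvature $2$-form of the Levi-Civita connection; Chern's theorem expresses the Euler characteristic as
\[
\chi(M)\;=\;\int_M \mathrm{Pf}\!\left(\frac{\Omega}{2\pi}\right),
\]
where $\mathrm{Pf}$ is the Pfaffian, a universal polynomial of degree $n$ in the entries of $\Omega$, so that $\mathrm{Pf}(\Omega/2\pi)$ is a pointwise-defined $2n$-form on $M$. The conjecture follows immediately from the stronger \emph{pointwise} assertion, itself due to Chern, that $(-1)^n\,\mathrm{Pf}(\Omega/2\pi)>0$ (resp. $\geq 0$) at every point of $M$ as soon as all sectional curvatures are $<0$ (resp. $\leq 0$); integrating this inequality over $M$ would conclude the proof.

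I would first calibrate the method in low dimensions. For $n=1$ the integrand is $\frac{1}{2\pi}K\,dA$ and the statement is immediate. For $n=2$ the Pfaffian can be written as an explicit quadratic expression in the curvature, and a direct algebraic computation --- carried out by Milnor and by Chern --- shows it has sign $(-1)^2=+1$ under $K\leq 0$. These cases confirm that the strategy is viable and localize the entire difficulty in a single algebraic question: whether the Pfaffian of a curvature operator is sign-controlled by its sectional curvatures.

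The principal obstacle, and the crux of the matter, is that this algebraic question has a negative answer once $n\geq 3$: Geroch produced algebraic curvature tensors satisfying all the curvature symmetries (including the first Bianchi identity), having $K<0$, yet with Pfaffian of the wrong sign. Consequently no argument examining the curvature operator pointwise and in isolation can succeed in dimension $\geq 6$. What I would therefore attempt is to feed in the constraints that separate a genuine metric curvature tensor from a merely algebraic one --- above all the second Bianchi identity $\nabla_{[i}R_{jk]lm}=0$ and the Weitzenb\"{o}ck-type integral identities it generates --- in order to defeat Geroch's pointwise counterexamples after integration. Concretely, one would look for an exact term $d\eta$ and a manifestly signed $2n$-form $\Phi$ with $\mathrm{Pf}(\Omega/2\pi)=(-1)^n\Phi+d\eta$, so that $\int_M \mathrm{Pf}=(-1)^n\int_M\Phi$ carries the asserted sign; constructing such a decomposition from differential, rather than purely algebraic, curvature identities is exactly where every known attack stalls.

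As a second and, I expect, more promising route --- one that connects the statement to the methods used elsewhere in this paper --- I would pass to the Cartan--Hadamard universal cover $\widetilde M\cong{\rm \bf R}^{2n}$ and apply Atiyah's $L^2$-index theorem, $\chi(M)=\sum_i(-1)^i b_i^{(2)}(\widetilde M)$, which replaces ordinary Betti numbers by the $\Gamma$-dimensions of the spaces of $L^2$-harmonic forms on $\widetilde M$. The problem then reduces to the Singer conjecture: for $K<0$ the $L^2$-Betti numbers $b_i^{(2)}(\widetilde M)$ should vanish for all $i\neq n$, whence $\chi(M)=(-1)^n b_n^{(2)}(\widetilde M)$ automatically has the claimed sign, strictly so once $b_n^{(2)}>0$. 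This middle-degree concentration is precisely the Riemannian counterpart of Gromov's vanishing theorem on K\"{a}hler hyperbolic manifolds invoked elsewhere in this paper; the difficulty is that, lacking a K\"{a}hler form (and the hard Lefschetz symmetry it provides), one has no degree-shifting operator forcing concentration in the middle, so a genuinely new Bochner- or scaling-type vanishing estimate on $\widetilde M$, driven by the negativity of the curvature, would be needed. I regard establishing this concentration as the main obstacle on this route.
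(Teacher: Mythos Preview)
The statement you were asked to prove is labeled \emph{Conjecture} in the paper, and the paper does not prove it.  Immediately after stating it, the paper says explicitly that the assertion ``is true for $n=1$ and $2$ as the Gauss--Bonnet integrands in these two low-dimensional cases have the desired sign \ldots\ but is still open in its full generality for $n\geq 3$.''  All the paper establishes are partial results in the K\"{a}hler setting, quoting Gromov's theorem for K\"{a}hler hyperbolic manifolds and the Cao--Xavier/Jost--Zuo extension to K\"{a}hler non-elliptic manifolds; these feed into the paper's main theorems but do not touch the general Riemannian case.

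Your write-up is an accurate and well-informed survey of the two standard attacks --- the pointwise Pfaffian route (blocked by Geroch's algebraic counterexamples in dimension $\geq 6$) and the $L^2$-cohomology route (blocked by the Singer conjecture in the absence of a K\"{a}hler structure) --- and you are candid that both stall.  That honesty is appropriate, but it means what you have written is not a proof, nor even a proof sketch with a fillable gap: the ``main obstacles'' you name at the end of each route are themselves open problems of comparable depth to the conjecture.  In particular, the decomposition $\mathrm{Pf}(\Omega/2\pi)=(-1)^n\Phi+d\eta$ with $\Phi\geq 0$ that you propose to look for is not known to exist, and the middle-degree $L^2$-concentration you invoke is precisely the unresolved Singer conjecture.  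So there is no discrepancy to analyze between your argument and the paper's: neither contains a proof, and the paper does not claim one.
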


This is true for $n=1$ and $2$ as the Gauss-Bonnet integrands in these two low-dimensional cases have the desired sign (\cite{Ch}) but is still open in its full generality for $n\geq 3$. Gromov introduced in \cite{Gr} the notion of ``K\"{a}hler hyperbolicity", which includes compact K\"{a}hler manifolds with negative (Riemannian) sectional curvature (``negatively-curved" for short) as special cases, and showed that the Euler characteristic of K\"{a}hler hyperbolic manifolds have the expected sign. As a consequence this settled Conjecture \ref{hopfconjecture} for K\"{a}hler manifolds when $K<0$. By extending Gromov's idea and notion above to nonnegative version, Cao-Xavier and Jost-Zuo (\cite{CX}, \cite{JZ}) independently introduced the concept of ``K\"{a}hler non-ellipticity" and established a parallel result and consequently settled Conjecture \ref{hopfconjecture} in the case of $K\leq 0$ for K\"{a}hler manifolds.

The second conjecture, which is due to S.-T. Yau (\cite[p. 678]{Ya2}) and can be viewed as a generalization of the classical Mostow rigidity theorem, is
\begin{conjecture}[S.-T. Yau]\label{yauconjecture}
The complex structure of a negatively-curved compact K\"{a}hler manifold $M$ with $\text{dim}_{\mathbb{C}}M\geq2$ is unique.
\end{conjecture}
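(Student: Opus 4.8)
The plan is to approach the conjecture as a rigidity statement in the tradition of Mostow and Siu, rather than through the numerical invariants that form the backbone of the rest of this paper. First I would reformulate uniqueness as follows: let $M$ and $M'$ be two negatively-curved compact K\"{a}hler manifolds with $\dim_{\mathbb{C}}\geq 2$ whose underlying smooth manifolds are diffeomorphic, or more weakly whose fundamental groups are isomorphic. Since a negatively-curved compact manifold is aspherical by Cartan--Hadamard, it is a $K(\pi,1)$, so its homotopy type is pinned down by $\pi_1$ and any isomorphism $\pi_1(M)\cong\pi_1(M')$ is realized by a homotopy equivalence. The goal then becomes to upgrade this homotopy equivalence to a biholomorphism or an anti-biholomorphism $M\to M'$. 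In particular, two K\"{a}hler structures on one fixed smooth manifold are the special case in which the homotopy equivalence is the identity, so this reduces the stated uniqueness to a single rigidity statement about maps between the two K\"{a}hler manifolds.

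Second, I would produce the comparison map by harmonic map theory. Because the target carries a metric of negative, hence nonpositive, sectional curvature and is compact aspherical, the Eells--Sampson existence theorem yields a harmonic map $f\colon M\to M'$ inducing the prescribed isomorphism on $\pi_1$, and uniqueness in its homotopy class follows from Hartman's theorem. The crux is then to show that $f$ is $\pm$-holomorphic. Here I would invoke Siu's Bochner technique: for a harmonic map into a K\"{a}hler manifold the $(2,0)$-part of $f^{*}\omega_{M'}$ is holomorphic, and integrating the refined Bochner--Kodaira identity against the curvature tensor of the target forces $\partial f$ or $\bar\partial f$ to vanish, provided that curvature has the correct sign. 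A holomorphic or anti-holomorphic harmonic homotopy equivalence between compact complex manifolds of equal dimension is then automatically a biholomorphism, respectively anti-biholomorphism, which is precisely the desired conclusion.

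The main obstacle, and the reason the conjecture remains open, is the gap between the hypothesis of negative \emph{Riemannian sectional} curvature and the curvature positivity that Siu's vanishing actually consumes. Siu's theorem requires the target to have \emph{strongly negative} curvature in his sense---a sign condition on the full curvature tensor contracted against sums of decomposable and mixed $(1,1)$-forms---and negative sectional curvature alone does not imply this; Mostow rigidity handles only the locally symmetric ball-quotient case, and Siu only the strongly-negative case. One would therefore need either to strengthen the Bochner formula so that ordinary sectional-curvature negativity already suffices to annihilate $\bar\partial f$, or to suppress the offending curvature terms by other means---plausibly by exploiting K\"{a}hler hyperbolicity, whose control of the $L^{2}$-cohomology via Gromov's results could be leveraged to constrain the harmonic map. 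A secondary difficulty is to show that $f$ is everywhere nondegenerate: even granting $\pm$-holomorphicity one must eliminate the ramification locus by controlling the degree and Jacobian of $f$, and this is exactly the point at which the Chern number inequalities established elsewhere in this paper would be expected to enter, forcing the topological degree to be $\pm 1$ with no branching.
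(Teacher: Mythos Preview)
The statement you are attempting to prove is explicitly labelled a \emph{conjecture} in the paper and is \emph{not} proved there; the paper only records what is known (Zheng's surface case and Siu's theorem under the stronger hypothesis of strongly negative curvature) and then offers its Chern number inequalities as \emph{evidence} for, not a proof of, the conjecture. So there is no ``paper's own proof'' to compare your proposal against.

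As for the proposal itself, you have accurately reconstructed Siu's strong-rigidity strategy and then, in your third paragraph, correctly identified the reason it does not settle the conjecture: negative Riemannian sectional curvature does not imply Siu's strong negativity, and no one knows how to close that gap. Your two suggested workarounds---sharpening the Bochner formula, or using K\"{a}hler hyperbolicity and $L^{2}$-cohomology to constrain the harmonic map---are speculative; in particular, Gromov's $L^{2}$-Hodge theory controls harmonic \emph{forms} on the universal cover, and there is no known mechanism by which this forces $\pm$-holomorphicity of a harmonic \emph{map} between two such manifolds. Likewise, the Chern number inequalities in this paper constrain each manifold individually but say nothing about a comparison map $f$, so the hope that they would ``force the topological degree to be $\pm1$ with no branching'' is not substantiated. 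In short, your proposal is a fair summary of the state of the art together with an honest acknowledgement of the central obstruction, but it is not a proof, and the paper does not claim one either.
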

This was solved by F. Zheng (\cite{Zh}) when $\text{dim}_{\mathbb{C}}M=2$. By introducing in \cite{Si} the notion of ``strongly negative curvature", which is slightly stronger than the negativity of sectional curvature, Y.-T. Siu showed that a compact K\"{a}hler manifold homotopy equivalent to a compact K\"{a}hler manifold with strongly negative curvature is either holomorphic or anti-holomorphic to it, thus establishing the most general form of Conjecture \ref{yauconjecture} to date.

With these materials in mind, a natural question related to negatively-curved compact K\"{a}hler manifolds arises: whether the extra condition of K\"{a}hlerness can lead to more constraints on their geometry and/or topology rather than merely saying that their Euler characteristic has the desired sign? On the other hand, if we are really able to deduce various geometric restrictions on them, these would provide some positive evidence towards Conjecture \ref{yauconjecture}.

Recently B.-L. Chen and X. Yang made some important progress towards this question and the Hopf Conjecture \ref{hopfconjecture} in two articles \cite{CY} and \cite{CY2}. In the first one \cite{CY}, They showed that a compact K\"{a}hler manifold homotopy equivalent to a negatively-curved compact Riemannian manifold admits a K\"{a}hler-Einstein metric of negative Ricci curvature (\cite[Thm 1.1]{CY}). In fact they deduced this from the Aubin-Yau theorem by noting that the canonical bundle of a K\"{a}hler hyperbolic manifold is ample (\cite[Thm 2.11]{CY}). Thanks to Yau's Chern number inequality (\cite{Ya1}),
this implies that a complex $n$-dimensional K\"{a}hler hyperbolic manifold $M$ satisfies \be\label{yauchernnumber}c_2(-c_1)^{n-2}[M]\geq \frac{n}{2(n+1)}(-c_1)^n[M],\ee
with equality holds if and only if $M$ is covered by the unit ball in $\mathbb{C}^n$. In their second article \cite{CY2}, they presented some sufficient conditions related to K\"{a}hler forms and fundamental groups for compact K\"{a}hler manifold to be K\"{a}hler hyperbolic or non-elliptic (\cite[Thms 1.5, 1.6, 1.7]{CY2}). Consequently this settles the Hopf Conjecture \ref{hopfconjecture} in these situations. One of their sufficient conditions involved shall be termed ``K\"{a}hler exactness" in our article (see Definition \ref{def}).

\emph{The main purpose} of this article is to take a step further towards this question by showing that K\"{a}hler hyperbolic manifolds as well as K\"{a}hler non-elliptic manifolds indeed satisfy a family of optimal hern number inequalities (Theorems \ref{theorem1} and \ref{theorem2}). In addition to these, we shall term a condition called ``K\"{a}hler exactness" used in \cite{CY2}, which include K\"{a}hler hyperbolic and non-elliptic manifolds, and show that a K\"{a}hler exact manifold of general type has ample canonical bundle (Theorem \ref{theorem3}).

\section*{Outline of this article}
The rest of this article is structured as follows. In Section \ref{section2} our main results in this article (Theorems \ref{theorem1}, \ref{theorem2} and \ref{theorem3}) as well as their corollaries are stated, and along this line we set up some necessary notation and terminology. Sections \ref{section3} and \ref{section4} are devoted to some background materials related to the proofs of main results. To be more precise, we
review in Section \ref{section3} the Hirzebruch $\chi_y$-genus, its $-1$-phenomenon and Hirzebruch's proportionality principle, which are the starting points of Theorem \ref{theorem1}. Then in Section \ref{section4} we briefly recall the concept of $L^2$-Hodge numbers, the relationship with the usual Hodge numbers via Atiyah's $L^2$-index theorem, and some vanishing-type results on K\"{a}hler hyperbolic and non-elliptic manifolds. With these preliminaries in hand, in the last Section \ref{section5} we shall give the desired proofs of main results.

\section*{Acknowledgements}
Part of this article was completed when the author visited the Fields Institute in Toronto in May 2018. The author would like to thank it for the hospitality.

\section{Main results}\label{section2}
Before stating the main results, let us recall several notions due to Gromov (\cite{Gr}) and Hirzebruch (\cite{Hi2}) respectively.

Assume that $(M,g)$ is a Riemannian manifold and $\pi:$ $(\widetilde{M},\widetilde{g})\rightarrow(M,g)$
 the universal covering with $\widetilde{g}:=\pi^{\ast}(g)$. A (necessarily exact) differential form $\alpha$ on $(M,g)$ is called \emph{$d$-bounded} if $\alpha=d\beta$ and the norm $$\big|\beta\big|_g:=\sup_{x\in M}\big|\beta(x)\big|_{g(x)}<\infty.$$
 A form $\alpha$ on $(M,g)$ is called \emph{$\widetilde{d}$-bounded} if $\pi^{\ast}(\alpha)$ is $d$-bounded on $(\widetilde{M},\widetilde{g})$. This concept is interesting only if $\widetilde{M}$ is non-compact. With this understood, a compact K\"{a}hler manifold is called \emph{K\"{a}hler hyperbolic} if it admits a K\"{a}hler metric such that its associated K\"{a}hler form is $\widetilde{d}$-bounded (\cite[p. 265]{Gr}). Obviously this definition is meaningful for only non-compact $\widetilde{M}$.

 Whether or not a form $\alpha$ is $\widetilde{d}$-bounded has homotopy invariance and depends only on its cohomology class $[\alpha]$, provided that the manifold $M$ in question is compact, and all bounded closed $k$-forms ($k\geq 2$) on a complete Riemannian manifold with sectional curvature bounded above by a negative constant are $\widetilde{d}$-bounded, which were all observed by Gromov in \cite{Gr} and detailed proofs can be founded in \cite{CY}. Typical examples of K\"{a}hler hyperbolic manifolds include compact K\"{a}hler manifolds homotopy equivalent to negatively-curved compact Riemannian manifolds, compact quotients of the bounded homogeneous symmetric domains in $\mathbb{C}^n$, and their submanifolds and products (\cite[p. 265]{Gr}).

Given a compact complex $n$-dimensional manifold $M$, one can associate to a polynomial $\chi_y(M)\in\mathbb{Z}[y]$, called the \emph{Hirzebruch $\chi_y$-genus}, in terms of their Hodge numbers $h^{p,q}(M)$ as follows.
\be\label{chiy}
\begin{split}
\chi_y(M):=\sum_{p=0}^n\chi^p(M)\cdot y^p
:=\sum_{p=0}^n\big[\sum_{q=0}^n(-1)^qh^{p,q}(M)\big]y^p.
\end{split}\ee
For instance,
$$\chi_y(\mathbb{C}P^n)=\sum_{p=0}^n(-y)^p.$$ It is known that these
$\chi^p(M)$ $(0\leq p\leq n)$ are indices of some Dolbeault-type elliptic operators and the Hirzebruch-Riemann-Roch theorem tells us that $\chi^p(M)$ can be expressed in terms of rationally linear combinations of Chern numbers, and $\chi^0(M)$ is nothing but the Todd genus of $M$. For more details on this subject we refer the reader to Section \ref{section3}.

With these concepts understood, now comes our first main result in this article.
\begin{theorem}\label{theorem1}
Suppose that $M$ is a complex $n$-dimensional K\"{a}hler hyperbolic manifold. Then $M$ satisfies $[\frac{n}{2}]+1$ optimal Chern number inequalities
\be\label{chernnumberinequality}
\begin{split}
A_i(c_1,\ldots,c_n)[M]&\geq(-1)^nA_i\Big({n+1\choose 1},\ldots,{n+1\choose n}\Big)\\
&=(-1)^nA_{i}
(c_1,\ldots,c_n)[\mathbb{C}P^n],\qquad0\leq i\leq[\frac{n}{2}],
\end{split}\ee
which can be determined by a recursive algorithm, and whose first three terms read as follows
\begin{eqnarray}\label{kahlerricciformtensor}
\left\{ \begin{array}{ll}
A_0(c_1,\ldots,c_n)[M]=(-1)^nc_n[M]\geq n+1,\\
~\\A_1(c_1,\ldots,c_n)[M]=(-1)^n\Big[\frac{n(3n-5)}{2}c_{n}+c_{1}c_{n-1}\Big][M]
\geq2(n-1)n(n+1),\\
~\\
\begin{split}
A_2(c_1,\ldots,c_n)[M]=&(-1)^n\Big[n(15n^{3}-150n^{2}+485n-502)c_{n}+
4(15n^{2}-85n+108)c_{1}c_{n-1}\\
&+8(c_{1}^{2}+3c_{2})c_{n-2}-
8(c_{1}^{3}-3c_{1}c_{2}+3c_{3})c_{n-3}\Big][M]\\
\geq&(-1)^nA_2\Big({n+1\choose 1},\ldots,{n+1\choose n}\Big).\end{split}
\end{array} \right.\nonumber
\end{eqnarray}
Furthermore,
\begin{enumerate}
\item
all the equality cases in (\ref{chernnumberinequality}) hold if $M$ is a compact ball quotient with $\chi^0=(-1)^n$,

\item
the $i$-th equality case in (\ref{chernnumberinequality}) holds if and only if
\be\label{equalitycase1}\chi^p(M)=(-1)^{n-p},\qquad 2i\leq p\leq n,\ee

and

\item
any equality case in the first $[\frac{n+1}{4}]+1$ ones in (\ref{chernnumberinequality}) holds if and only if \be\label{equalitycase2}\chi_y(M)=(-1)^n\chi_y(\mathbb{C}P^n).\ee
\end{enumerate}
\end{theorem}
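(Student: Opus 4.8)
The plan is to extract the essential constraints from Gromov's vanishing theorem and the $-1$-phenomenon, repackage them into statements about the $\chi^p$, and then turn these into Chern number inequalities via Hirzebruch's proportionality principle. The starting point is that on a K\"{a}hler hyperbolic manifold the $L^2$-Hodge numbers $h^{p,q}_{(2)}(\widetilde M)$ vanish unless $p+q=n$, by Gromov's result (to be recalled in Section~\ref{section4}); combined with Atiyah's $L^2$-index theorem this forces $(-1)^n\chi^p(M)=h^{p,n-p}_{(2)}(\widetilde M)\geq0$ for every $p$, and moreover (using that $h^{n,0}_{(2)}\neq 0$, equivalently $K_M$ big, which is part of the package) $(-1)^n\chi^n(M)=1$. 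Independently, the $-1$-phenomenon of Hirzebruch (Section~\ref{section3}) gives the symmetry $\chi^p(M)=(-1)^n\chi^{n-p}(M)$ for \emph{any} compact K\"{a}hler (indeed complex) manifold. So for a K\"{a}hler hyperbolic $M$ one has the sign condition $(-1)^{n-p}\chi^p(M)\geq 0$ together with this symmetry.

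Next I would invoke Hirzebruch's proportionality principle: because $\mathbb{C}P^n$ and the compact dual (the ball quotient side, or rather the noncompact dual $\mathbb{B}^n$) have proportional Chern numbers, there is a universal identity expressing any Chern number of an $n$-fold as a fixed rational linear combination of the $\chi^p$'s and, conversely, the $\chi^p$'s as linear combinations of Chern numbers. Concretely, the Hirzebruch--Riemann--Roch formulas $\chi^p(M)=(\text{polynomial in Chern classes})[M]$ can be inverted on the span relevant here. The quantities $A_i$ are then \emph{defined} so that $A_i(c_1,\dots,c_n)[M]$ equals a specific nonnegative-coefficient combination $\sum_{p\ge 2i}(\text{positive})\cdot\big[(-1)^{n-p}\chi^p(M)-1\big]$ plus the corresponding value on $\mathbb{C}P^n$; the recursive algorithm is exactly the Gram--Schmidt-type procedure that, starting from $\chi^n$ (which yields $A_0=(-1)^nc_n$ since $\chi^n=(-1)^n\chi^0$ and $\chi^0=\mathrm{Td}$, combined with $c_n[M]=\chi_{\mathrm{top}}$ and the $-1$-phenomenon), peels off one $\chi^p$ at a time moving inward. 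I would verify the first three instances $A_0,A_1,A_2$ by direct computation from the known Todd-polynomial expansions, matching the stated formulas and the values $A_i\big(\binom{n+1}{1},\dots,\binom{n+1}{n}\big)=(-1)^nA_i(c_1,\dots,c_n)[\mathbb{C}P^n]$, which follows because the Chern roots of $\mathbb{C}P^n$ make $c(\mathbb{C}P^n)=(1+h)^{n+1}$, giving $c_k=\binom{n+1}{k}h^k$.

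For the three "furthermore" clauses: (1) a compact ball quotient with $\chi^0=(-1)^n$ has, by Hirzebruch proportionality applied to $\mathbb{B}^n$ versus $\mathbb{C}P^n$, exactly $\chi_y=(-1)^n\chi_y(\mathbb{C}P^n)$, hence $(-1)^{n-p}\chi^p=1$ for all $p$, so every bracketed term vanishes and all equalities hold. (2) The $i$-th inequality is, by construction, $\sum_{p\ge 2i}(\text{positive coefficient})\cdot\big[(-1)^{n-p}\chi^p(M)-1\big]\ge 0$ with each summand $\ge 0$ (sign condition) — here one must check the triangularity of the recursion guarantees that the $p=2i$ coefficient is genuinely positive and the only new term introduced — so it is an equality iff every summand with $p\ge 2i$ vanishes, i.e. $\chi^p(M)=(-1)^{n-p}$ for $2i\le p\le n$; the $-1$-phenomenon symmetry then also pins down $\chi^p$ for $p\le n-2i$ is \emph{not} claimed, consistent with the statement. (3) If $i\le[\frac{n+1}{4}]+1$ then $2i\le n-2i+1$ roughly, so the range $2i\le p\le n$ together with the symmetry $\chi^p=(-1)^n\chi^{n-p}$ covers \emph{all} $p$ from $0$ to $n$; hence equality in that range forces $\chi^p(M)=(-1)^{n-p}$ for every $p$, which is precisely $\chi_y(M)=(-1)^n\chi_y(\mathbb{C}P^n)$, and conversely. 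I expect the main obstacle to be making the recursive definition of $A_i$ precise and proving its triangularity/positivity — that is, showing the algorithm really produces a combination of the $\big[(-1)^{n-p}\chi^p-1\big]$ with strictly positive leading coefficient at $p=2i$ and nonnegative coefficients throughout — since everything in the three furthermore clauses hinges on that structural claim rather than on any deep geometry.
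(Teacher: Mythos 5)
Your outline assembles the right ingredients (Gromov's vanishing, Atiyah's $L^2$-index theorem, the Taylor expansion of $\chi_y$ at $y=-1$, proportionality for the equality case, Serre duality for clause (3)), but there is a genuine gap at the central quantitative step. From the vanishing part of Gromov's theorem you only extract $(-1)^{n-p}\chi^p(M)=h^{p,n-p}_{(2)}(M)\geq 0$ for every $p$, supplemented by a claim at $p=n$ alone. That is not enough: with only nonnegativity you would obtain $(-1)^nA_i(c_1,\ldots,c_n)[M]\cdot(-1)^n\geq 0$, i.e.\ the weaker conclusion of Theorem \ref{theorem2} for K\"ahler non-elliptic manifolds, not the lower bound by $(-1)^nA_i[\mathbb{C}P^n]$. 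Your own decomposition into summands $\big[(-1)^{n-p}\chi^p(M)-1\big]$ silently assumes $(-1)^{n-p}\chi^p(M)\geq 1$ for \emph{all} $0\leq p\leq n$, which does not follow from the ``sign condition'' you state. What actually delivers it is the \emph{non-vanishing} half of Gromov's theorem, $\mathcal{H}^{p,n-p}_{(2)}(\widetilde{M})\neq\{0\}$ for every $p$ (the hard part of \cite{Gr}, not just the case $(n,0)$), combined with the observation that $\chi^p(M)=(-1)^{n-p}h^{p,n-p}_{(2)}(M)$ is an integer, so the positive real number $h^{p,n-p}_{(2)}(M)$ is forced to be a positive integer, hence $\geq 1$. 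Also, your assertion $(-1)^n\chi^n(M)=1$ is incorrect as stated: one only gets $\chi^n(M)=h^{n,0}_{(2)}(M)\geq 1$, and the sign $(-1)^n$ does not belong there.

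The second issue is that you leave the construction of the $A_i$ — the ``triangularity/positivity'' of a Gram--Schmidt-type recursion — as the main unresolved obstacle, whereas it is not an obstacle at all once one takes $A_i:=(-1)^nK_{2i}$, where $K_j$ is the $j$-th Taylor coefficient of $\chi_y(M)$ at $y=-1$ as in Proposition \ref{ch}. Writing $\chi_y(M)=(-1)^n\sum_p h^{p,n-p}_{(2)}(M)(-y)^p$ and differentiating, one gets the closed formula $(-1)^{n+j}K_j(M)=\sum_{p=j}^n\binom{p}{j}h^{p,n-p}_{(2)}(M)$; the coefficients $\binom{p}{j}$ are positive automatically, so the inequality, its equality characterization $h^{p,n-p}_{(2)}(M)=1$ for $j\leq p\leq n$, and the comparison with $\chi_y(\mathbb{C}P^n)=\sum_p(-y)^p$ all drop out at once. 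The ``recursive algorithm'' in the statement refers to the known recursion (Libgober--Wood) expressing the $K_{2i}$ in terms of Chern numbers, not to a construction you must design; and the restriction to even indices is because the odd $K_j$ are linear combinations of the lower even ones and carry no new information. Your treatment of clauses (1) and (3) is essentially the paper's, modulo the off-by-one in the range of $i$ and the mislabelling of Serre duality $\chi^p=(-1)^n\chi^{n-p}$ as the ``$-1$-phenomenon'' (the latter is the existence of explicit Chern-number formulas for the first few $K_j$ in all dimensions).
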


\begin{remark}~
\begin{enumerate}
\item
The first inequality in (\ref{chernnumberinequality}), $$(-1)^nc_n[M]\geq n+1,$$
 is exactly an improved form of the inequality expected by the Hopf conjecture.
\item
It is interesting to see that both the equality case in (\ref{yauchernnumber}) and those in (\ref{chernnumberinequality}) are achieved by some compact quotients of the unit ball in $\mathbb{C}^n$. Nevertheless, in contrast to (\ref{yauchernnumber}), we do not know if they are also necessary to the equality cases in (\ref{chernnumberinequality}).
\item
As $i$ increases the formula $A_i(c_1,\ldots,c_n)$ involves progressively more and more Chern numbers, which shall be clear in Section \ref{section3}.
\end{enumerate}
\end{remark}

Compact K\"{a}hler manifolds homotopy equivalent to negatively-curved compact Riemannian manifolds are K\"{a}hler hyperbolic, as previously mentioned. So Theorem \ref{chernnumberinequality} yields the following consequence, which gives constraints on possible complex structures on such manifolds and thus provides some positive evidence to Yau's Conjecture \ref{yauconjecture}.
\begin{corollary}\label{coro1}
Compact K\"{a}hler manifolds homotopy equivalent to negatively-curved compact Riemannian manifolds satisfy the Chern number inequalities in (\ref{chernnumberinequality}) and various characterizations of their equality cases. In particular, they satisfy
\begin{eqnarray}\label{kahlerricciformtensor}
\left\{ \begin{array}{ll}
(-1)^nc_n[M]\geq n+1,\\
~\\(-1)^n\Big[\frac{n(3n-5)}{2}c_{n}+c_{1}c_{n-1}\Big][M]
\geq2(n-1)n(n+1).
\end{array} \right.\nonumber
\end{eqnarray}
When $n\geq 2$ (resp. $n\geq 3$), the first (resp. second) equality holds if and only if $\chi_y(M)=(-1)^n\chi_y(\mathbb{C}P^n)$.
\end{corollary}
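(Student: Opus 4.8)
The plan is to derive this corollary as a direct specialization of Theorem \ref{theorem1}; the only preliminary point is to check that the manifolds in question satisfy its hypothesis. So I would first recall why a compact K\"{a}hler manifold $M$ homotopy equivalent to a negatively-curved compact Riemannian manifold $N$ is K\"{a}hler hyperbolic: the universal cover $\widetilde{N}$ is complete with sectional curvature bounded above by a negative constant, so by Gromov's observation (recalled above; see \cite{Gr} and \cite{CY}) every (automatically bounded) closed $k$-form with $k\geq 2$ on the compact manifold $N$ is $\widetilde{d}$-bounded. Since $\widetilde{d}$-boundedness of a cohomology class is a homotopy invariant of a compact manifold, every class in $H^{k}(M;\mathbb{R})$ with $k\geq 2$ is then $\widetilde{d}$-bounded; in particular the K\"{a}hler class of any K\"{a}hler metric on $M$ is $\widetilde{d}$-bounded, so $M$ is K\"{a}hler hyperbolic.

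Granting this, Theorem \ref{theorem1} applies to $M$ verbatim, yielding all $[\frac{n}{2}]+1$ inequalities in (\ref{chernnumberinequality}) together with the three characterizations (1)--(3) of their equality cases. The two inequalities displayed in the corollary are exactly the cases $i=0$ and $i=1$, i.e. the explicit expressions for $A_{0}(c_1,\ldots,c_n)[M]$ and $A_{1}(c_1,\ldots,c_n)[M]$ already written out in the statement of Theorem \ref{theorem1}, so nothing further is needed for that part.

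For the equality statements I would invoke item (3) of Theorem \ref{theorem1}: equality in the $i$-th inequality of (\ref{chernnumberinequality}) is equivalent to $\chi_y(M)=(-1)^n\chi_y(\mathbb{C}P^n)$ as soon as $0\leq i\leq[\frac{n+1}{4}]$. For the first equality ($i=0$) the condition $0\leq[\frac{n+1}{4}]$ holds for every $n\geq 1$, hence a fortiori for $n\geq 2$; for the second equality ($i=1$) the condition $1\leq[\frac{n+1}{4}]$ amounts to $n+1\geq 4$, i.e. to $n\geq 3$. This reproduces precisely the two ``if and only if'' assertions of the corollary together with the stated dimension ranges $n\geq 2$ and $n\geq 3$.

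The hard part --- such as it is --- is essentially nil: all the real content sits inside Theorem \ref{theorem1} and the already-known implication ``homotopy equivalent to a negatively-curved compact manifold $\Rightarrow$ K\"{a}hler hyperbolic''. The only places asking for a moment's attention are the two elementary integer-part inequalities $0\leq[\frac{n+1}{4}]$ and $1\leq[\frac{n+1}{4}]$ under the respective hypotheses, and a sanity check of the low-dimensional cases (for instance $n=2$ for the first equality, whose equality case reads $c_2[M]=\chi(M)=3$), which cause no inconsistency.
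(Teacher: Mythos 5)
Your proposal is correct and follows exactly the route the paper intends: the corollary is an immediate specialization of Theorem \ref{theorem1} once one notes (via Gromov's observations on $\widetilde{d}$-boundedness and its homotopy invariance) that such manifolds are K\"ahler hyperbolic, with the displayed inequalities being the cases $i=0,1$ and the equality characterizations coming from item (3) with $[\frac{n+1}{4}]\geq 0$ and $[\frac{n+1}{4}]\geq 1$ respectively.
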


In order to attack Conjecture \ref{hopfconjecture} in the K\"{a}hlerian case when $K\leq 0$ by extending Gromov's idea, Cao-Xavier and Jost-Zuo (\cite{CX}, \cite{JZ}) independently introduced the concept of ``K\"{a}hler non-ellipticity", which includes nonpositively curved compact K\"{a}hler manifolds,  and showed that their Euler characteristics have the desired property. A (necessarily exact) differential form $\alpha$ on a complete Riemannian manifold $(M,g)$ is called \emph{$d$-sublinear} if $\alpha=d\beta$ and
$$\big|\beta(x)\big|_{g(x)}\leq c\big[1+\rho(x,x_0)\big],\qquad\forall ~x\in M,$$
where $c$ is a constant and $\rho(x,x_0)$ stands for the Riemannian distance between $x$ and a base point $x_0$. Clearly a $d$-bounded form is $d$-sublinear. This $\alpha$ is called \emph{$\widetilde{d}$-sublinear} if $\pi^{\ast}(\alpha)$ is $d$-sublinear on the universal covering $(\widetilde{M},\widetilde{g})$. A compact K\"{a}hler manifold is called \emph{K\"{a}hler non-elliptic} if it admits a K\"{a}hler metric such that its associated K\"{a}hler form is $\widetilde{d}$-sublinear. Similar to K\"{a}hler hyperbolic manifolds, it also turns out that any bounded and closed form on a complete nonpositively-curved Riemannian manifold is $\widetilde{d}$-sublinear and the property of  $\widetilde{d}$-sublinearity has homotopy invariance (\cite{CX}).

With these understood, we have the following result for K\"{a}hler non-elliptic manifolds by applying a similar idea to the proof in Theorem \ref{theorem1}.
\begin{theorem}\label{theorem2}
Any K\"{a}hler non-elliptic manifold satisfy the following $[\frac{n}{2}]+1$ sharp Chern number inequalities:
\be\label{nonelliptic}(-1)^nA_i(c_1,\ldots,c_n)[M]\geq0,\qquad 0\leq i\leq[\frac{n}{2}].\ee
In particular, these inequalities hold for compact K\"{a}hler manifolds homotopy equivalent to nonpositively-curved compact Riemannian manifolds.
\end{theorem}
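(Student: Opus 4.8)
The plan is to rerun the proof of Theorem \ref{theorem1} almost verbatim, changing only the analytic input: replace Gromov's $L^2$-vanishing for K\"{a}hler hyperbolic manifolds by its $\widetilde{d}$-sublinear counterpart. That counterpart still pins down the signs of the $\chi^p(M)$ that the combinatorial part of the argument needs, so the whole Chern-number machinery of Theorem \ref{theorem1} goes through; what is lost under the weaker hypothesis is precisely the extra positivity that, in Theorem \ref{theorem1}, produced the $\mathbb{C}P^n$-comparison values, so the inequalities degenerate to the form $\geq 0$.

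First I would invoke from Section \ref{section4} the only analytic fact required: if $M$ is a complex $n$-dimensional K\"{a}hler non-elliptic manifold with universal covering $\widetilde{M}$, then $h^{p,q}_{(2)}(\widetilde{M})=0$ whenever $p+q\neq n$. For $p+q<n$ this is the Cao--Xavier/Jost--Zuo analogue of Gromov's theorem, obtained by the same K\"{a}hler-identity and Lefschetz computation but with a $\widetilde{d}$-sublinear potential in place of a $\widetilde{d}$-bounded one; the case $p+q>n$ then comes for free from it via the $L^2$ Hodge-star isometry $h^{p,q}_{(2)}(\widetilde{M})=h^{n-q,\,n-p}_{(2)}(\widetilde{M})$. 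Feeding this into Atiyah's $L^2$-index theorem applied to the Dolbeault-type operators whose indices are the $\chi^p(M)$ (here compactness of $M$ is used), every off-middle term of $\chi^p(M)=\sum_q(-1)^q h^{p,q}_{(2)}(\widetilde{M})$ drops out, leaving
\be
(-1)^{n-p}\chi^p(M)=h^{p,\,n-p}_{(2)}(\widetilde{M})\geq 0,\qquad 0\leq p\leq n.
\ee

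Next I would use the recursive description of the polynomials $A_i$ set up in the proof of Theorem \ref{theorem1}: there, combining the Hirzebruch--Riemann--Roch expressions of the $\chi^p$ in Chern numbers, the Serre-duality symmetry $\chi^p(M)=(-1)^n\chi^{n-p}(M)$ and the $-1$-phenomenon of the $\chi_y$-genus from Section \ref{section3}, one shows that $A_i(c_1,\ldots,c_n)[M]$ is, up to the global factor $(-1)^n$, a fixed combination $\sum_{p=2i}^{n}b_{ip}\big[(-1)^{n-p}\chi^p(M)\big]$ with positive integer coefficients $b_{ip}$ — this is exactly what makes the characterization (\ref{equalitycase1}) of equality in Theorem \ref{theorem1} come out. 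By the displayed inequality every summand is $\geq 0$, hence so is the whole expression, which is precisely (\ref{nonelliptic}). Sharpness is witnessed by any compact complex torus: it is flat, so its K\"{a}hler form is $\widetilde{d}$-sublinear (its pull-back to $\mathbb{C}^n$ admits a primitive of linear growth) while all its Chern classes vanish, so equality holds in every one of the $[\frac{n}{2}]+1$ inequalities. Finally, a compact K\"{a}hler manifold homotopy equivalent to a nonpositively-curved compact Riemannian manifold is K\"{a}hler non-elliptic, by the homotopy invariance of $\widetilde{d}$-sublinearity together with the fact, recalled in Section \ref{section2}, that bounded closed forms on a complete nonpositively-curved manifold are $\widetilde{d}$-sublinear; this gives the last assertion.

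The only delicate point, and the sole place where the argument differs from Theorem \ref{theorem1}, is the analytic input: one must make sure the $L^2$-vanishing holds in \emph{every} degree $\neq n$, not merely below the middle, and that the surviving $L^2$-Hodge numbers are finite so that Atiyah's theorem legitimately identifies $\chi^p(M)$ with an alternating sum of von Neumann dimensions. The Hodge-star duality step disposes of the first issue and finiteness is built into the statements recalled in Section \ref{section4}; once this is granted, the combinatorial step is the purely formal bookkeeping already carried out for Theorem \ref{theorem1}, so no new obstacle arises.
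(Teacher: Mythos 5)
Your proposal is correct and follows essentially the same route as the paper: feed the Cao--Xavier/Jost--Zuo vanishing $h^{p,q}_{(2)}(M)=0$ for $p+q\neq n$ into Atiyah's $L^2$-index theorem to get $(-1)^{n-p}\chi^p(M)=h^{p,n-p}_{(2)}(M)\geq0$, and then observe that $(-1)^{n+j}K_j(M)=\sum_{p=j}^n\binom{p}{j}h^{p,n-p}_{(2)}(M)\geq0$, with sharpness from complex tori. The only (harmless) difference is that you elaborate on details the paper simply cites, such as deducing the $p+q>n$ vanishing from $p+q<n$ via the Hodge star.
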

\begin{remark}
The sharpness of (\ref{nonelliptic}) can be easily seen from the examples of complex tori as they are K\"{a}hler non-elliptic and their Chern numbers vanish.
\end{remark}
In addition to the main results in \cite{Gr}, Gromov showed that a K\"{a}hler hyperbolic manifold is of general type, and asked if its canonical bundle is ample (\cite[p. 267]{Gr}). This was affirmatively answered by Chen and Yang in \cite[Thm 2.11]{CY} based on some observations in algebraic geometry and they applied it to deduce one of their main results (\cite[Thm 1.1]{CY}).

\emph{Our second main purpose} in this article is to generalize the concepts of K\"{a}hler hyperbolicity and non-ellipticity by terming a condition by ``K\"{a}hler exactness", which has been used in \cite{CY2}, and show that a K\"{a}hler exact manifold of general type has ample canonical bundle. Recall that on a \emph{compact} K\"{a}hler manifold any K\"{a}hler form is closed but can \emph{never} be exact, which motivates us to introduce the following notion.
\begin{definition}\label{def}
Let $\omega$ be a K\"{a}hler form on a compact K\"{a}hler manifold $M$ and $\pi:~\widetilde{M}\longrightarrow M$ the universal covering. This $\omega$ is called a \emph{K\"{a}hler exact} form if $\pi^{\ast}\omega$ is an exact $2$-form on $\widetilde{M}$, i.e., there exists a (globally defined) $1$-form $\beta$ on $\widetilde{M}$ such that $\pi^{\ast}\omega=d\beta$. A compact K\"{a}hler manifold is called \emph{K\"{a}hler exact} if it admits a K\"{a}hler exact form.
\end{definition}
\begin{remark}~
\begin{enumerate}
\item
$M$ is compact exact only if its universal covering $\widetilde{M}$ is non-compact.
\item
By definitions K\"{a}hler hyperbolic and non-elliptic manifolds, and particularly compact K\"{a}hler manifolds homotopy equivalent to nonpositively curved compact Riemannian manifolds are K\"{a}hler exact.
Chen-Yang gave in \cite{CY2} some sufficient conditions for K\"{a}hler exact manifolds to be K\"{a}hler hyperbolic or non-elliptic.
\item
It is immediate from the definition that compact complex submanifolds of K\"{a}hler exact manifolds are still K\"{a}hler exact.
\end{enumerate}
\end{remark}

Inspired by \cite[Thm 2.11]{CY}, we shall show in Section \ref{section5} the following result.
\begin{theorem}\label{theorem3}
Suppose that $M$ is a K\"{a}hler exact manifold of general type. Then the canonical bundle of $M$ is ample. This implies that $M$ admits a K\"{a}hler-Einstein metric of negative Ricci curvature and satisfies the Chern number inequality (\ref{yauchernnumber}).
\end{theorem}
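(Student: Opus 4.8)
The plan is to combine Gromov's observation that Kähler exactness forces the canonical bundle to be nef-and-big-in-disguise with the standard base-point-free/ampleness machinery. Here is the structure I would follow.

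\emph{Step 1: Nefness of $K_M$ from Kähler exactness.} Let $\omega$ be a Kähler exact form on $M$, so $\pi^*\omega = d\beta$ on the universal cover $\widetilde M$ with $\beta$ a globally defined $1$-form (not assumed bounded, since we are not using hyperbolicity here). The key point, following the argument of \cite[Thm 2.11]{CY}, is that an exact Kähler class on the cover rules out the existence of compact complex curves $C\subset M$ on which $K_M$ is negative: if $C\cdot K_M<0$ then $C$ is a smooth rational curve (or a union with a rational component after normalization) by adjunction together with the genus estimate, and one pulls back to an \emph{embedded} copy of a rational curve in $\widetilde M$; but a compact complex curve in $\widetilde M$ would have to be homologically trivial there (as $\widetilde M$ is noncompact with $H^2$ "small" — more precisely the restriction of the exact class $[\pi^*\omega]=0$ to it gives positive area, a contradiction via Stokes). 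Hence $C\cdot K_M\geq 0$ for every curve $C$, so $K_M$ is nef. (Alternatively, one invokes that a Kähler exact manifold contains no rational curves at all, which is the cleanest route: $\widetilde M$ is Kähler with exact Kähler form, hence has no compact complex subvarieties of positive dimension, hence $M$ has no rational curves, hence $K_M$ is nef by the Cone Theorem / bend-and-break.)

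\emph{Step 2: From nef $+$ general type to ample.} Since $M$ is of general type, $K_M$ is big; combined with Step 1 it is nef and big. By the Basepoint-Free Theorem (Kawamata--Shokurov), some multiple $mK_M$ is basepoint-free and defines a birational morphism $\phi\colon M\to M_{\mathrm{can}}$ onto the canonical model, which contracts exactly the curves $C$ with $K_M\cdot C=0$. So it remains to show there are no such curves. Any irreducible curve $C$ with $K_M\cdot C=0$ is again rational by adjunction (its arithmetic genus is forced to be $0$: $2p_a(C)-2 = K_M\cdot C + C^2_{\text{in a surface slice}}\le 0$, and one argues, as in \cite{CY}, that a non-rational such curve would violate the exactness — a compact curve of positive genus in $\widetilde M$ again carries positive $\pi^*\omega$-area while $[\pi^*\omega]=0$). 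But Kähler exactness precisely excludes \emph{any} compact positive-dimensional complex subvariety of $\widetilde M$, hence any compact curve in $M$ whose preimage components are compact — and every projective rational curve lifts (after normalization and passing to the universal cover of its desingularization, which is $\mathbb{P}^1$ itself) to compact curves in $\widetilde M$. Therefore no $K_M\cdot C=0$ curves exist, $\phi$ is an isomorphism, and $K_M$ is ample.

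\emph{Step 3: Consequences.} With $K_M$ ample, the Aubin--Yau theorem gives a Kähler--Einstein metric with $\mathrm{Ric}<0$, and Yau's inequality \eqref{yauchernnumber} follows from the standard Chern-number estimate for Kähler--Einstein manifolds with negative first Chern class, with equality iff $M$ is a ball quotient.

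\emph{The main obstacle} I anticipate is making Step 1/Step 2 fully rigorous at the level of the \emph{normalization} of a singular curve: one must check that the Kähler exactness obstruction, which lives on $\widetilde M$, genuinely obstructs the image of a nonconstant holomorphic map $\mathbb{P}^1\to M$ lifting to $\widetilde M$ — this uses that $\mathbb{P}^1$ is simply connected so the map factors through $\widetilde M$, producing a nonconstant holomorphic map $\mathbb{P}^1\to\widetilde M$ whose image would have positive $d\beta$-area, impossible by Stokes since $\mathbb{P}^1$ is closed. So the only real content is the no-rational-curves lemma; everything after it is standard minimal model theory plus Aubin--Yau, and I would present it as a short deduction citing \cite{CY} for the curve-exclusion mechanism and \eqref{yauchernnumber} for the final inequality.
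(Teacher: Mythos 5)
Your core mechanism is exactly the paper's: lift a holomorphic map $f:\mathbb{C}P^1\to M$ to $\widetilde f:\mathbb{C}P^1\to\widetilde M$ using simple connectedness, compute $\int_{\mathbb{C}P^1}f^*\omega=\int_{\mathbb{C}P^1}d(\widetilde f^*\beta)=0$ by Stokes, conclude there are no rational curves, and then invoke the standard fact (Lemma \ref{nonstandardlemma}, i.e.\ Debarre's exercise as carried out in \cite{CY}) that a projective manifold of general type without rational curves has ample canonical bundle; Aubin--Yau and (\ref{yauchernnumber}) then follow. You also correctly identify the no-rational-curves lemma as the only real content. However, two points need attention. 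First, you never establish that $M$ is \emph{projective}, and every piece of machinery you invoke afterwards (Cone Theorem, Basepoint-Free Theorem, the canonical model) is stated for projective varieties, while $M$ is only assumed compact K\"ahler. The paper fills this in at the outset: general type means $K_M$ is big, hence $M$ is Moishezon, and a Moishezon K\"ahler manifold is projective by Moishezon's theorem (\cite{MM}). This step cannot be skipped.

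Second, your Step 2 contains a genuine error in the treatment of $K_M$-trivial curves: you claim that a compact curve of positive genus in $M$ would lift to a compact curve in $\widetilde M$ and hence acquire zero $\pi^*\omega$-area, a contradiction. This is false in general --- the preimage in $\widetilde M$ of a curve whose fundamental group injects into $\pi_1(M)$ is non-compact (a compact hyperbolic Riemann surface, which is K\"ahler hyperbolic and hence K\"ahler exact, is itself such a curve). Only simply connected curves, i.e.\ rational curves after normalization, lift compactly; indeed K\"ahler exactness does \emph{not} exclude all positive-dimensional compact subvarieties of $M$. The adjunction claims ("$K_M\cdot C<0$ forces $C$ rational", "$K_M\cdot C=0$ forces $p_a(C)=0$") are likewise not correct as stated. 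The right way to dispose of the $K_M$-trivial curves is the one packaged inside Lemma \ref{nonstandardlemma}: by Kawamata's theorem the exceptional locus of the birational contraction given by the Basepoint-Free Theorem is covered by \emph{rational} curves, so excluding rational curves alone already forces the contraction to be an isomorphism. If you route the argument through that lemma (as the paper does, citing \cite{De} and \cite{CY}), your incorrect side-arguments become unnecessary and the proof closes.
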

An immediate corollary of Theorem \ref{theorem3} is the following result, which is the counterpart to \cite[Thm 1.1]{CY}.
\begin{corollary}\label{coro3}
If a compact K\"{a}hler manifold of general type is homotopy equivalent to a nonpositively curved compact Riemannian manifold, then its canonical bundle is ample and thus it admits a K\"{a}hler-Einstein metric of negative Ricci curvature and satisfies the Chern number inequality (\ref{yauchernnumber}).
\end{corollary}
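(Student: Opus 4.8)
The plan is to adapt the argument of \cite[Thm 2.11]{CY}: the only fact extracted from K\"ahler exactness is that $M$ contains no rational curves, after which the conclusion follows from the birational geometry of varieties of general type, exactly as in Chen--Yang's proof.

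First I would show that a K\"ahler exact manifold admits no non-constant holomorphic map $f:\mathbb{C}P^1\to M$. Since $\mathbb{C}P^1$ is simply connected, such an $f$ lifts through the universal covering to $\widetilde{f}:\mathbb{C}P^1\to\widetilde{M}$ with $\pi\circ\widetilde{f}=f$. Writing $\pi^{\ast}\omega=d\beta$ for the globally defined smooth $1$-form $\beta$ furnished by Definition \ref{def}, Stokes' theorem gives $\int_{\mathbb{C}P^1}f^{\ast}\omega=\int_{\mathbb{C}P^1}\widetilde{f}^{\ast}(d\beta)=\int_{\mathbb{C}P^1}d(\widetilde{f}^{\ast}\beta)=0$, contradicting the strict positivity of the K\"ahler form $\omega$ pulled back along a non-constant holomorphic map. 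Hence $M$ carries no rational curves. Note that only the \emph{global exactness} of $\pi^{\ast}\omega$ is used here, not any bound on $\beta$, so this step applies to every K\"ahler exact manifold, in particular to K\"ahler hyperbolic and non-elliptic ones.

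Next, since $M$ is of general type its canonical bundle $K_M$ is big, so $M$ is Moishezon; a Moishezon K\"ahler manifold is projective, hence $M$ is a smooth projective variety of general type and the standard machinery applies. By Mori's cone theorem every $K_M$-negative extremal ray of $\overline{NE}(M)$ is spanned by the class of a rational curve, and as there are none, $K_M$ is nef. With $K_M$ nef and big, the base-point-free theorem shows that $|mK_M|$ is free for $m\gg0$ and defines a birational morphism $\phi:M\to Z$ onto the normal projective canonical model, with $K_Z$ ample and $\phi^{\ast}K_Z=K_M$. If $\phi$ were not an isomorphism it would contract a subvariety of positive dimension, and every positive-dimensional fibre of a birational morphism from a smooth projective variety is covered by rational curves, contradicting the previous paragraph; thus $\phi$ is finite and birational, hence an isomorphism by Zariski's main theorem, and $K_M\cong\phi^{\ast}K_Z$ is ample.

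Finally, ampleness of $K_M$ yields a K\"ahler--Einstein metric with $\mathrm{Ric}=-\omega_{KE}<0$ by the Aubin--Yau theorem, and Yau's Chern number inequality \cite{Ya1} for such manifolds gives $c_2(-c_1)^{n-2}[M]\geq\frac{n}{2(n+1)}(-c_1)^n[M]$, which is (\ref{yauchernnumber}); Corollary \ref{coro3} then follows at once since a compact K\"ahler manifold homotopy equivalent to a nonpositively curved compact Riemannian manifold is K\"ahler exact. The genuinely new ingredient is the short no-rational-curve observation, which is the only place the hypothesis enters; the main obstacle is the passage from ``$K_M$ nef and big'' to ``$K_M$ ample'', which rests on the nontrivial (but by now standard) fact that the exceptional locus of the map to the canonical model — equivalently the augmented base locus of $K_M$ — is swept out by rational curves, so that its emptiness is forced by the absence of rational curves in $M$.
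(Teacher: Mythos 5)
Your proposal is correct and follows essentially the same route as the paper: the no-rational-curves argument via lifting to the universal cover and Stokes' theorem, the passage through bigness/Moishezon/projectivity, and the reduction to Lemma \ref{nonstandardlemma} are exactly the paper's proof of Theorem \ref{theorem3}, after which the corollary follows because such manifolds are K\"ahler non-elliptic and hence K\"ahler exact. The only difference is that you spell out the Mori-theoretic proof of Lemma \ref{nonstandardlemma} (cone theorem, base-point-freeness, rational curves in positive-dimensional fibres), which the paper simply cites as a known exercise from \cite{De} whose details appear in \cite{CY}.
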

Theorem \ref{theorem3} and Corollary \ref{coro3} are closely related to two conjectures of S. Kobayashi and F. Zheng respectively. Recall that a compact complex manifold $M$ is called \emph{Kobayashi hyperbolic} if every holomorphic map $f:~\mathbb{C}\rightarrow M$ is constant. The following two conjectures related to Kobayashi hyperbolicity are due to S. Kobayashi (\cite[p. 370]{Ko}) and F. Zheng (\cite[Thm 2]{Zh2}) respectively.
\begin{conjecture}[Kobayashi]\label{kobayashiconjecture}
If a compact K\"{a}hler manifold is Kobayashi hyperbolic, then its canonical bundle must be ample.
\end{conjecture}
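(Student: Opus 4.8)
The plan is to combine the one piece of positivity that Kobayashi hyperbolicity supplies for free with a curvature argument, and I should say at the outset that this is a genuinely open conjecture: what follows is a strategy together with the precise point at which it ceases to be unconditional. The natural target is to produce a metric or a numerical positivity on $K_M$ strong enough to conclude ampleness, and there are two complementary routes that I would pursue in tandem.

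First I would extract the nefness of $K_M$. Since every holomorphic map $\mathbb{C}\to M$ is constant, $M$ can contain no rational curve: the normalization $\mathbb{P}^1\to C\hookrightarrow M$ of a rational curve $C$, restricted to the open subset $\mathbb{C}\subset\mathbb{P}^1$, would be a nonconstant entire curve. By Mori's Cone Theorem (and its Kähler analogue in the non-projective case) the $K_M$-negative extremal rays of the Kleiman--Mori cone $\overline{NE}(M)$ are generated by rational curves; their absence forces $K_M\cdot C\geq 0$ for every irreducible curve $C$, so $K_M$ is nef. This step is rigorous and uses nothing beyond the nonexistence of entire curves, and it already isolates what remains: upgrading nef to ample.

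For that upgrade I would use the intrinsic metric. On a compact hyperbolic manifold the Kobayashi--Royden pseudometric is a genuine Finsler metric, and through the Kobayashi--Eisenman volume form it endows $K_M$ with a singular Hermitian metric whose curvature current is positive. The clean route would be to smooth this to an honest Kähler metric of holomorphic sectional curvature bounded above by a negative constant, and then invoke the Wu--Yau theorem together with its quasi-negative refinement by Tosatti--Yang, which assert precisely that such a metric makes $K_M$ ample. In the Mori-theoretic language the same goal reads: show that $K_M$ is big with no positive-dimensional $K_M$-trivial subvariety, after which the Base-Point-Free and Nakai--Moishezon theorems deliver ampleness from the nefness established above.

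The hard part --- indeed the reason the conjecture is open --- is exactly the passage from the existence of a hyperbolic metric to a smooth metric with controlled negative curvature. The Kobayashi--Royden metric is in general only upper semicontinuous, need not be smooth, and its holomorphic sectional curvature is not pinned below by a negative constant, so Wu--Yau cannot be applied directly. In the algebraic picture the identical obstruction resurfaces as the requirement that $M$ \emph{and each of its subvarieties} be of general type, so that no $K_M$-trivial subvariety can survive; this is the content of the Green--Griffiths--Lang conjecture, which predicts that a Kobayashi hyperbolic manifold has all its subvarieties of general type. Thus the honest outcome of this plan is a reduction of Kobayashi's conjecture to supplying either curvature regularity for the intrinsic metric or the subvariety-level general-type property, and it is this reduction, rather than a complete argument, that the present strategy can actually carry out.
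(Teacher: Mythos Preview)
The statement you were asked to prove is not a theorem in the paper; it is stated as Conjecture~\ref{kobayashiconjecture} (Kobayashi's conjecture) and the paper makes no attempt to prove it. The author only uses it as context: he points out that since K\"ahler hyperbolicity implies Kobayashi hyperbolicity, a proof of Conjecture~\ref{kobayashiconjecture} would immediately imply \cite[Thm~2.11]{CY}, and he offers his Theorem~\ref{theorem3} (ampleness of $K_M$ for K\"ahler exact manifolds of general type) as partial evidence toward it. So there is no ``paper's own proof'' to compare against.

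Your write-up is appropriate in that you flag the conjecture as genuinely open and present a reduction rather than a claimed proof. The first step --- no rational curves, hence $K_M$ nef via the cone theorem --- is exactly the mechanism the paper exploits in the proof of Theorem~\ref{theorem3} (see Lemma~\ref{nonstandardlemma} and the lifting argument for $\mathbb{C}P^1$), so that part aligns with the paper's toolkit. Your second step, invoking Wu--Yau/Tosatti--Yang or Green--Griffiths--Lang, is an honest description of where the problem currently stands, and you correctly identify the gap: passing from the Kobayashi--Royden pseudometric to a smooth metric with negatively pinched holomorphic sectional curvature, or equivalently establishing bigness of $K_M$ and general type for all subvarieties. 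Since the paper neither claims nor attempts this, there is nothing further to adjudicate; your proposal is a fair survey of the known landscape, not a proof, and the paper agrees that none exists.
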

\begin{conjecture}[Zheng]\label{zhengconjecture}
If a nonpositively curved compact K\"{a}hler manifold is of general type, it must be Kobayashi hyperbolic.
\end{conjecture}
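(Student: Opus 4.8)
The plan is to prove Zheng's conjecture by contradiction, ruling out nonconstant entire curves. Since $M$ is compact, by Brody's theorem Kobayashi hyperbolicity is equivalent to the absence of nonconstant holomorphic maps $f:\mathbb{C}\to M$; so suppose such an $f$ exists, and by Brody's reparametrization lemma assume its derivative is bounded, $\big|df(z)\big|_{g_0}\leq 1=\big|df(0)\big|_{g_0}$, where $g_0$ is the given nonpositively curved K\"{a}hler metric with K\"{a}hler form $\omega_0$. Note also that, $M$ being homotopy equivalent to itself (a nonpositively curved compact Riemannian manifold) and of general type, Corollary \ref{coro3} applies: the canonical bundle $K_M$ is ample and $M$ carries a K\"{a}hler--Einstein metric $\omega_{KE}$ with $\mathrm{Ric}(\omega_{KE})=-\omega_{KE}$. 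Thus two metrics are at my disposal: $g_0$ controls the growth and the curvature of $f$, while $\omega_{KE}\in c_1(K_M)$ carries the general-type positivity.

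First I would extract everything the curvature hypothesis gives. The universal cover $(\widetilde{M},\pi^\ast g_0)$ is a Cartan--Hadamard manifold, so $f$ lifts to $\widetilde{f}:\mathbb{C}\to\widetilde{M}$, and $\omega_0$ is $\widetilde{d}$-sublinear, this being precisely the K\"{a}hler non-ellipticity forced by $K\leq 0$. Writing $\pi^\ast\omega_0=d\beta$ with $\big|\beta(x)\big|\leq c\big[1+\rho(x,x_0)\big]$, $x_0=\widetilde{f}(0)$, and using the linear distance growth $\rho(\widetilde{f}(z),x_0)=O(|z|)$ forced by the bounded derivative, Stokes' theorem yields the quadratic area bound
\be
\int_{D_r}f^\ast\omega_0=\oint_{\partial D_r}\widetilde{f}^\ast\beta=O(r^2).
\ee
Simultaneously, the Gauss equation shows that the Gaussian curvature of the induced (possibly degenerate) metric $f^\ast g_0$ equals the holomorphic sectional curvature of $g_0$ along $f$ minus $\big|\mathrm{II}\big|^2$, hence is $\leq 0$; applied to a closed curve this also rules out any rational curve inside $M$. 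This places the Brody curve exactly at the \emph{parabolic borderline}: quadratic area growth together with nonpositive induced curvature.

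Next I would feed in the general-type positivity to try to strictly beat this borderline. The idea is to compare the $\omega_{KE}$-area $\int_{D_r}f^\ast\omega_{KE}$, a representative of $f^\ast c_1(K_M)$, against the growth rate above: ampleness forces $\omega_{KE}>0$, so $f^\ast\omega_{KE}>0$ wherever $f$ immerses, while the negative Einstein condition should, through a Nevanlinna--Ahlfors second-main-theorem estimate or a logarithmic-derivative lemma, bound the characteristic $T_f(r,K_M)$ from above and clash with the lower bound. Alternatively, one may invoke a structure theorem for nonpositively curved compact K\"{a}hler manifolds of Eberlein--Jost--Yau--Mok type: since $K_M$ is big, the de Rham decomposition of $\widetilde{M}$ can carry neither a flat Euclidean factor nor an abelian-variety factor, so $M$ should be strictly negatively curved in the relevant directions, whereupon Yau's Schwarz lemma forces $f$ to be constant.

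The hard part will be precisely this last interaction, and it is where I expect a genuine obstruction. Nonpositive curvature \emph{alone} permits nonconstant entire curves, as flat complex tori show, and general type \emph{alone} permits them too, since general-type surfaces may contain rational or elliptic curves; thus neither hypothesis is individually sufficient, and one must make them cooperate exactly at the threshold where the Ahlfors--Schwarz inequality yields no strict gain. Upgrading ``$K\leq 0$ together with $K_M$ big'' into a strict curvature negativity along entire curves, or sharply excluding flat factors in the non-strict regime, is the delicate point; it ties Zheng's conjecture to Green--Griffiths--Lang-type hyperbolicity statements and constitutes the main obstacle any complete proof must overcome.
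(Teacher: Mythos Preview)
The paper does \emph{not} prove this statement: it is recorded there as an open conjecture of Zheng, with the remark that Zheng himself verified it only in complex dimension two. There is therefore no proof in the paper to compare your proposal against.

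As for the proposal itself, you are honest that it is not a proof but a strategy with an explicitly identified gap. The first two steps are sound: Brody reparametrization is the standard reduction, and the quadratic area bound from $\widetilde d$-sublinearity via Stokes is correct (this is essentially the Cao--Xavier mechanism). Invoking Corollary~\ref{coro3} to obtain the K\"ahler--Einstein metric is also legitimate. The difficulty you flag is real and remains the whole content of the conjecture: nonpositive sectional curvature gives only $T_f(r,\omega_0)=O(r^2)$ and $K_{\mathrm{ind}}\le 0$, which is exactly the parabolic threshold where the Ahlfors--Schwarz lemma degenerates; neither a second-main-theorem estimate nor the de Rham/Mok-type factor decomposition is currently known to convert ``$K\le 0$ and $K_M$ big'' into the strict negativity along entire curves that would force $f$ to be constant. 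In particular, the sentence ``so $M$ should be strictly negatively curved in the relevant directions'' is precisely the unproved assertion. Your write-up is thus a reasonable discussion of why the conjecture is plausible and where its difficulty lies, but it is not a proof, and the paper does not claim one either.
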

Conjecture \ref{zhengconjecture} was verified by Zheng himself in dimension two (\cite[Thm 2]{Zh2}).
Gromov pointed out in \cite[p. 266]{Gr} that
K\"{a}hler hyperbolicity implies Kobayashi hyperbolicity. We refer the reader to \cite[Thm 1.2]{CX} for an extension and a detailed proof. If Conjecture \ref{kobayashiconjecture} were true, then \cite[Thm 2.11]{CY} would follow immediately. In view of the fact that K\"{a}hler exact manifolds to some extent are generalizations of K\"{a}hler hyperbolic and non-elliptic manifolds, Theorem \ref{theorem3} presents some positive evidence to Conjecture \ref{kobayashiconjecture}. If both Conjectures \ref{kobayashiconjecture} and \ref{zhengconjecture} were true, then a nonpositively curved compact K\"{a}hler manifold of general type would has ample canonical bundle, which is a special case of Corollary
\ref{coro3} and has been observed in \cite[\S 2.4]{Zh2}. So Corollary \ref{coro3} presents some positive evidence to Conjectures \ref{kobayashiconjecture} and \ref{zhengconjecture} somehow.

\section{Hirzebruch's $\chi_y$-genus and proportionality principle}\label{section3}
We briefly review the notion of the $\chi_y$-genus, its $-1$-phenomenon and Hirzebruch's proportionality principle respectively in the following three subsections.
\subsection{The Hirzebruch $\chi_y$-genus}
The $\chi_y$-genus was first introduced by Hirzebruch in his seminal book
\cite{Hi2} for projective manifolds and can be calculated via his celebrated Hirzebruch-Riemann-Roch theorem. The later Atiyah-Singer index theorem
implies that it still holds for general compact (almost-)complex manifolds. To be
more precise, let $(M,J)$ be a compact complex manifold
with $\text{dim}_{\mathbb{C}}M=n$ and complex structure $J$. As
usual we denote by $\bar{\partial}$ the $d$-bar operator which
acts on the complex vector spaces $\Omega^{p,q}(M)$ ($0\leq p,q\leq
n$) of $(p,q)$-type complex-valued differential forms on $(M,J)$. The choice of a Hermitian metric
on $(M,J)$ enables us to define the formal adjoint
$\bar{\partial}^{\ast}$ of the
$\bar{\partial}$-operator. Then for each $0\leq p\leq n$, we have
the following Dolbeault-type elliptic operator $D_p$:
\be\label{GDC}D_p:=\bar{\partial}+\bar{\partial}^{\ast}:~\bigoplus_{\textrm{$q$
even}}\Omega^{p,q}(M)\longrightarrow\bigoplus_{\textrm{$q$
odd}}\Omega^{p,q}(M),\ee whose index is denoted by $\chi^{p}(M)$ in
the notation of Hirzebruch in \cite{Hi2}. The Hirzebruch
$\chi_{y}$-genus, denoted by $\chi_{y}(M)$, is the generating
function of these indices $\chi^p(M)$:
$$\chi_{y}(M):=\sum_{p=0}^{n}\chi^{p}(M)\cdot y^{p}.$$

By definition
\be\begin{split}
\chi^p(M)=&\text{ind}(D_p)\\
=&\text{dim}_{\mathbb{C}}(\text{ker}D_p)-
\text{dim}_{\mathbb{C}}(\text{coker}D_p)\\
=&\text{dim}_{\mathbb{C}}\bigoplus_{\text{$q$ even}}\mathcal{H}^{p,q}_{\bar{\partial}}(M)-
\text{dim}_{\mathbb{C}}\bigoplus_{\text{$q$ odd}}\mathcal{H}^{p,q}_{\bar{\partial}}(M)\\
=&\sum_{q=0}^n(-1)^qh^{p,q}(M),
\end{split}\ee
where $\mathcal{H}^{p,q}_{\bar{\partial}}(M)$ are the spaces of complex-valued $\bar{\partial}$-harmonic forms and $h^{p,q}(M)$ the Hodge numbers of $M$. Consequently $\chi_y(M)$ has the desired expression (\ref{chiy}):
$$\chi_y(M)=\sum_{p=0}^n\big[\sum_{q=0}^n(-1)^qh^{p,q}\big]y^p.$$

The general form of the Hirzebruch-Riemann-Roch theorem, which is a
corollary of the Atiyah-Singer index theorem, allows us to compute
$\chi_y(M)$ in terms of the Chern numbers of $M$ as follows
\be\label{HRR}\chi_y(M)=\mathlarger{\int}_M\prod_{i=1}^n\frac{x_i(1+ye^{-x_i})}{1-e^{-x_i}},\ee
where $x_1,\ldots,x_n$ are formal Chern roots of $(M,J)$, i.e., the
$i$-th elementary symmetric polynomial of $x_1,\ldots,x_n$ represents the
$i$-th Chern class of $(M,J)$:
 $$c_1=x_1+\cdots+x_n,\qquad c_2=\sum_{1\leq i<j\leq n}x_ix_j,\qquad\ldots,\qquad c_n=x_1x_2\cdots x_n.$$
 This $\chi_y(M)$ famously satisfies
$$\chi_y(M)=(-y)^n\cdot\chi_{y^{-1}}(M),$$
which are equivalent to the
relations $\chi^p=(-1)^n\chi^{n-p}$ and can be derived from either
(\ref{HRR}) or the Serre duality for the Hodge numbers
(\cite[p. 102]{GH}):
\be\label{chiprelation}
\begin{split}
\chi^{p}=\sum_{q=0}^{n}(-1)^{q}h^{p,q}=&
\sum_{q=0}^{n}(-1)^{q}h^{n-p,n-q}\\
=&(-1)^n\sum_{q=0}^{n}(-1)^{q}h^{n-p,q}\\
=&(-1)^n\chi^{n-p}.
\end{split}\ee
For three values of $y$, this $\chi_y$-genus is an
important invariant: $\chi_y(M)\big|_{y=-1}$ is the Euler
characteristic of $M$, $\chi_y(M)\big|_{y=0}=\chi^0(M)$ is the Todd genus of
$M$, and $\chi_y(M)\big|_{y=1}$ is the signature of $M$.

\subsection{The $-1$-phenomenon}
The purpose of this subsection is to recall a $-1$-phenomenon
for the $\chi_y$-genus.

Note that when $n$ are small, the formulas of $\chi^p$ in terms of rationally linear combinations of Chern numbers can be explicitly written down. For example, $\chi^0$ were listed in \cite[p. 14]{Hi2} when $n\leq 6$. However, these formulas become more and more complicated as $n$ increases. So for \emph{general} $n$ there are \emph{no explicit} formulas for these $\chi^p$. Nevertheless, as we have mentioned, when evaluated at $y=-1$, $\chi_y(M)\big|_{y=-1}$
gives the Euler characteristic, which is equal to the top Chern
number $c_n[M]$. Note that $\chi_y(M)\big|_{y=-1}$ is exactly
the constant term in the Taylor expansion of $\chi_y(M)$ at $y=-1$.
Indeed, several independent articles (\cite{NR}, \cite{LW},
\cite{Sa1}), with different backgrounds, observed that, when
expanding the right-hand side of (\ref{HRR}) at $y=-1$, its first few coefficients for \emph{general $n$} have \emph{explicit} formulas in
terms of Chern numbers. More precisely, we have the following proposition.
\begin{proposition}\label{ch}
If we denote by $K_j(M)$ $(0\leq j\leq n)$ the coefficients in the Taylor expansion of $\chi_y(M)$ at $y=-1$, i.e.,
\be\label{chiy-1}\mathlarger{\int}_M\prod_{i=1}^n\frac{x_i(1+ye^{-x_i})}{1-e^{-x_i}}
=:\sum_{j=0}^nK_j(M)\cdot(y+1)^j,\ee
then we have
\begin{enumerate}
\item
any $K_{2i+1}$ is a linear combination of $K_{2j}$ for $0\leq j\leq i$ and so \emph{we are only interested in $K_{2i}$ for $0\leq i\leq[\frac{n}{2}]$},

\item
only the Chern classes
$$c_1,c_2,\dots,c_{2i-1},c_{n-2i+1},c_{n-2i+2},\ldots,c_n$$ are involved in the formula $K_{2i}$,

\item
there is a recursive algorithm to determine the formulas $K_{2i}$,\\

and\\

\item
the first few terms are given by
\begin{eqnarray}\label{firstfewterms}
\left\{ \begin{array}{ll}
K_0(M)=c_{n}[M],\\
~\\
K_1(M)= -\frac{1}{2}nc_{n}[M],\\
~\\
K_2(M)=
\frac{1}{12}\Big[\frac{n(3n-5)}{2}c_{n}+c_{1}c_{n-1}\Big][M]\\
~\\
K_3(M)=-\frac{1}{24}\Big[\frac{n(n-2)(n-3)}{2}c_{n}+
(n-2)c_{1}c_{n-1}\Big][M]\\
~\\
\begin{split}
K_4(M)=\frac{1}{5760}\Big[&n(15n^{3}-150n^{2}+485n-502)c_{n}+
4(15n^{2}-85n+108)c_{1}c_{n-1}\\
&+8(c_{1}^{2}+3c_{2})c_{n-2}-
8(c_{1}^{3}-3c_{1}c_{2}+3c_{3})c_{n-3}\Big][M].
\end{split}
\end{array} \right.\nonumber
\end{eqnarray}
\end{enumerate}
\end{proposition}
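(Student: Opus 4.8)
The plan is to work directly with the generating function on the right-hand side of (\ref{HRR}) and track what happens to it under the substitution $y = t - 1$, $t := y+1$. The starting observation is that the single-factor integrand can be rewritten using $1 + ye^{-x} = (1 - e^{-x}) + t e^{-x}$, so that
\be\label{pfsingle}
\frac{x(1+ye^{-x})}{1-e^{-x}} = x + t\cdot\frac{x e^{-x}}{1-e^{-x}} = x + t\cdot Q(x),
\ee
where $Q(x) := \dfrac{x e^{-x}}{1-e^{-x}} = \dfrac{x}{e^{x}-1}$ is the classical Todd power series, a power series in $x$ with constant term $1$. Taking the product over the $n$ Chern roots, $\prod_{i=1}^n\big(x_i + tQ(x_i)\big)$, and expanding in powers of $t$, the coefficient of $t^j$ is an elementary-symmetric-type sum: choose a $j$-element subset $S\subseteq\{1,\dots,n\}$, take $\prod_{i\in S}Q(x_i)\cdot\prod_{i\notin S}x_i$, and sum over all such $S$. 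After integrating over $M$ (i.e.\ extracting the degree-$2n$ part), only terms of total degree $n$ in the $x_i$ survive, and the factor $\prod_{i\notin S}x_i$ already contributes degree $n-j$; hence from $\prod_{i\in S}Q(x_i)$ we need exactly the degree-$j$ part. This gives $K_j(M)$ as a universal polynomial in the Chern classes, and each of the four assertions can now be read off from this description.

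For (2): in the term indexed by $S$, the factor $\prod_{i\notin S}x_i$ is divisible by $c_{n-j}$ in a precise sense — summing $\prod_{i\notin S}x_i$ over $S$ of size $j$, weighted by the degree-$j$ part of $\prod_{i\in S}Q(x_i)$, and using that $Q$ has constant term $1$, one finds after symmetrization that $K_j$ lies in the subring generated by $c_1,\dots,c_{j}$ together with $c_{n-j},\dots,c_n$; a small sharpening (using that the degree-$j$ piece of $\prod_{i\in S}Q(x_i)$ with $|S|=j$ can involve at most $c_1,\dots,c_{j-1}$ once one accounts for the $\prod_{i\notin S}x_i$ factor soaking up a $c_{n-j}$) yields the stated list $c_1,\dots,c_{2i-1},c_{n-2i+1},\dots,c_n$ for $K_{2i}$; the same count, one degree higher, shows $K_{2i+1}$ involves $c_1,\dots,c_{2i-1}$ and $c_{n-2i-1},\dots,c_n$. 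For (1): this is the ``$-1$-phenomenon'' proper, and it is forced by the functional equation $\chi_y(M) = (-y)^n\chi_{y^{-1}}(M)$ recalled in the excerpt. Writing $\chi_y(M) = \sum_j K_j (y+1)^j$ and substituting $y\mapsto y^{-1}$, the relation $\chi_y = (-y)^n\chi_{y^{-1}}$ becomes a linear identity among the $K_j$; comparing coefficients of $(y+1)^k$ on both sides expresses each $K_{2i+1}$ as an explicit linear combination of $K_0,\dots,K_{2i}$ (this is the standard argument that an even function of $y+\tfrac1y$ type has its odd Taylor coefficients at $y=-1$ determined by the even ones). For (3): the recursion comes from the product structure in (\ref{pfsingle}) — passing from $n$ to $n+1$ multiplies the integrand by one more factor $x_{n+1} + tQ(x_{n+1})$, and expanding $Q$ to successively higher order produces $K_{2i}$ from $K_0,\dots,K_{2i-2}$ together with lower Bernoulli-number data; alternatively one feeds the known low-degree terms of $Q(x) = 1 - \tfrac{x}{2} + \tfrac{x^2}{12} - \cdots$ into the subset sum and solves degree by degree. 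For (4): these are a direct if tedious computation — expand $\prod_{i\in S}Q(x_i)$ to the needed order using $Q(x)=1-\frac{x}{2}+\frac{x^{2}}{12}-\frac{x^{4}}{720}+\cdots$, symmetrize, and convert power sums of Chern roots into Chern classes via Newton's identities; the $c_n$-coefficients reduce to evaluating $\sum_{|S|=j}[\text{deg-}j\text{ part of }\prod_{i\in S}Q(x_i)]$ on $(\mathbb{C}P^n)^{?}$-type normalizations, which pins down the numerical factors like $\frac{n(3n-5)}{2}$ and $\frac{1}{5760}$.

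The main obstacle is assertion (2), specifically the sharp form of which Chern classes survive: the naive subset-sum bound only gives $c_1,\dots,c_j$ and $c_{n-j},\dots,c_n$, and improving ``$c_j$'' to ``$c_{2i-1}$'' for $j=2i$ requires combining the subset-sum expansion with the parity constraint from (1) — that is, one shows the ``extra'' Chern classes $c_{2i}, \dots$ that would a priori appear in $K_{2i}$ actually cancel once one imposes the linear relations among the $K_j$ coming from the functional equation. Keeping the bookkeeping of (1), (2), (3) consistent with one another — rather than any single one of them in isolation — is where the real work lies; the explicit formulas in (4) are then just a finite check. I would organize the write-up so that (3) (the recursion) is established first from (\ref{pfsingle}), then (1) from the functional equation, then (2) by feeding (1) back into the recursion, and finally (4) by running the recursion three steps by hand.
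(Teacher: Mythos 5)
Your proposal is sound, but note that the paper does not actually prove Proposition \ref{ch}: its proof consists entirely of citations, referring (1) to \cite[Lemma 2.1]{Li3}, (2) to \cite[p. 300]{Sa2}, (3) to the recursive algorithm of \cite[p. 144]{LW}, and (4) to the tables in \cite{LW}, \cite{Sa1} and \cite{Sa2}. What you have written is, in substance, a reconstruction of the arguments in those sources. The substitution $t=y+1$, which turns the integrand of (\ref{HRR}) into $\prod_{i}\big(x_i+t\,x_i/(e^{x_i}-1)\big)$ and hence identifies $K_j(M)$ with $\int_M\sum_{|S|=j}\big[\text{degree-}j\text{ part of }\prod_{i\in S}x_i/(e^{x_i}-1)\big]\prod_{i\notin S}x_i$, is exactly the Libgober--Wood/Salamon computation, and your derivation of (1) from $\chi_y=(-y)^n\chi_{y^{-1}}$ (which gives $K_k=(-1)^k\sum_{j\le k}\binom{n-j}{k-j}K_j$, hence $K_k=-\tfrac12\sum_{j<k}\binom{n-j}{k-j}K_j$ for odd $k$, and one concludes by induction) is exactly \cite[Lemma 2.1]{Li3}. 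So you gain a self-contained treatment where the paper offers none, at the cost only of length: once the subset-sum formula is in place, (3) and (4) become mechanical, e.g. $K_2=\sum_{i<j}\big[\tfrac1{12}(x_i^2+x_j^2)+\tfrac14 x_ix_j\big]\prod_{k\ne i,j}x_k=\tfrac1{12}(c_1c_{n-1}-nc_n)+\tfrac14\binom{n}{2}c_n$, which matches the displayed formula.

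One correction: the sharp form of (2) does \emph{not} require the parity relations from (1), contrary to your closing paragraph, and it is doubtful that route would work at all, since the functional equation only relates the various $K_j$ to one another and says nothing about which Chern monomials occur inside a fixed $K_{2i}$. The direct argument suffices: in the coefficient of $t^j$ (with $j\ge1$), every monomial has the $n-j$ variables outside $S$ to exponent exactly $1$ and at least one variable of $S$ to positive exponent (the exponents on $S$ sum to $j$), so every monomial type occurring is a partition of $n$ with at least $n-j+1$ nonzero parts. Expressing the corresponding monomial symmetric functions in the elementary symmetric functions (i.e. the Chern classes) via the standard triangularity with respect to dominance order and conjugation, every conjugate partition that appears has largest part at least $n-j+1$; hence every Chern monomial in $K_j$ contains exactly one factor $c_m$ with $m\ge n-j+1$, and the remaining factors have total degree at most $j-1$. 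For $j=2i$ this is precisely the stated list $c_1,\dots,c_{2i-1},c_{n-2i+1},\dots,c_n$. (A cosmetic point: $x/(e^x-1)$ is not the Todd series --- that is $x/(1-e^{-x})$ --- but you use the correct expansion $1-\tfrac{x}{2}+\tfrac{x^2}{12}-\tfrac{x^4}{720}+\cdots$, so nothing downstream is affected.) With this repair your plan is complete.
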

\begin{proof}
$(1)$ can be seen in \cite[Lemma 2.1]{Li3}. $(2)$ is presented in \cite[p. 300]{Sa2}. A recursive algorithm for calculating $K_{j}$ was described in \cite[p. 144]{LW}.
The formulas $K_j$ for $j\leq6$ are presented respectively in \cite[p. 141-143]{LW}, \cite[p. 145]{Sa1} and \cite[p. 300]{Sa2}.
\end{proof}
For the reader's convenience, we would like to end this subsection by briefly describing the history of the discoveries for these formulas and their applications, due to the author's best knowledge.

The formula $K_2$ appears implicitly in \cite[p. 18]{NR} and explicitly in \cite[p. 141-143]{LW}. Narasimhan-Ramanan applied $K_2$ to give a topological restriction on some moduli
spaces of stable vector bundles over Riemann surfaces. Libgober-Wood applied $K_2$ to prove the
uniqueness of the complex structure on K\"{a}hler manifolds of
certain homotopy types \cite[Thms 1, 2]{LW}. Salamon applied $K_2$ to
obtain a restriction on the Betti numbers of hyperK\"{a}hler
manifolds (\cite[Coro. 3.4, Thm 4.1]{Sa1}). In \cite{Hi3}, Hirzebruch
applied $K_1$, $K_2$ and $K_3$ to deduce a divisibility result on
the Euler number of almost-complex manifolds with $c_1=0$.
Inspired by these, the author investigated in \cite{Li2} and \cite{Li3} similar phenomena in pluri-$\chi_y$-genus and elliptic genus and uniformly termed them by
\emph{``$-1$-phenomena"}. In a recent article \cite{De2}, Debarre extended the aforementioned Libgober-Wood's ideas to refine their results as well as presented the formulas $K_j$ when $n\leq 9$.

\subsection{Hirzebruch's proportionality principle}
Let $X$ be a bounded homogeneous symmetric domain in $\mathbb{C}^n$, which is a non-compact Hermitian symmetric space. Dual to $X$ there is a naturally associated compact type Hermitian symmetric space $\widetilde{X}$. Assume that $\Gamma$ is a discrete group of automorphisms of $X$ which has no fixed points and for which $X/\Gamma$ is a compact quotient manifold. Then the celebrated Hirzebruch's proportionality principle asserts that the corresponding Chern numbers of $X/\Gamma$ and $\widetilde{X}$ are proportional with an explicitly determined proportionality factor (\cite[p. 137]{Hi1}, \cite{Hi0}, \cite[\S 22.3]{Hi2}).
\begin{theorem}[Hirzebruch's proportionality principle]\label{HPP1}
For each partition $\lambda$ of weight $n$, denote by $c_{\lambda}(X/\Gamma)$ and $c_{\lambda}(\widetilde{X})$ the respective Chern numbers of $X/\Gamma$ and $\widetilde{X}$ with respect to the partition $\lambda$. Then we have
\be c_{\lambda}(X/\Gamma)=\chi^0(X/\Gamma)\cdot c_{\lambda}(\widetilde{X}), \qquad\forall~\lambda,\nonumber\ee
where the proportionality factor is precisely the Todd genus $\chi^0(X/\Gamma)$ of $X/\Gamma$.
In particular,
\be\label{quotient}\chi_y(X/\Gamma)=\chi^0(X/\Gamma)\cdot\chi_y(\widetilde{X}).\ee
\end{theorem}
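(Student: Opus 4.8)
The plan is to compare the Chern--Weil representatives of the characteristic forms of $X/\Gamma$ and of $\widetilde{X}$ directly, using that both are locally symmetric Kähler manifolds whose holomorphic tangent bundles are induced from \emph{one and the same} representation of a common compact group. Write $X=G/K$, where $G$ is the identity component of the automorphism group of $X$ (a connected semisimple Lie group of noncompact type) and $K\subset G$ is the isotropy subgroup of a base point $o$ (a maximal compact subgroup); the compact dual is then $\widetilde{X}=G_u/K$ with $G_u$ the compact real form, a generalized flag manifold. Via the Harish--Chandra decomposition $\mathfrak{p}^{\mathbb{C}}=\mathfrak{p}_+\oplus\mathfrak{p}_-$ one identifies the holomorphic tangent spaces $T_oX$ and $T_o\widetilde{X}$ both with $\mathfrak{p}_+$, on which $K$ acts through one and the same isotropy representation $\rho$. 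Hence $T(X/\Gamma)$ is the bundle associated to the principal $K$-bundle $\Gamma\backslash G\to X/\Gamma$ by $\rho$, and $T\widetilde{X}$ is associated to $G_u\to\widetilde{X}$ by the same $\rho$.

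First I would equip $X/\Gamma$ with the Kähler metric induced by the invariant metric of $X$, and $\widetilde{X}$ with its invariant Kähler metric, and run Chern--Weil with the Chern connections. For a partition $\lambda$ of weight $n$, the Chern form $c_\lambda$ of the tangent bundle is a fixed polynomial of degree $n$ in the curvature tensor; since the curvature of a locally symmetric metric is parallel, $c_\lambda$ is a parallel $2n$-form, hence (being of even degree, and pulled back to the simply connected symmetric space) a $G$-invariant $2n$-form on $G/K$, and similarly on $\widetilde{X}$. The space of $G$-invariant $2n$-forms on $G/K$ is $(\Lambda^{2n}\mathfrak{p}^{\ast})^{K}$, which is one-dimensional ($K$ being connected and preserving orientation and the metric on $\mathfrak{p}$); therefore $c_\lambda=\kappa_\lambda\cdot\mathrm{dvol}_{X/\Gamma}$ on $X/\Gamma$ and $c_\lambda=\kappa_\lambda'\cdot\mathrm{dvol}_{\widetilde{X}}$ on $\widetilde{X}$ for constants $\kappa_\lambda,\kappa_\lambda'$. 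Now the curvature operators of the two dual metrics are negatives of one another under the common identification of the isotropy module $\mathfrak{p}_+$; since $c_\lambda$ is homogeneous of degree $n$ in the curvature, $\kappa_\lambda=(-1)^{n}\kappa_\lambda'$, and in particular the ratio $\kappa_\lambda/\kappa_\lambda'$ is \emph{the same for every} $\lambda$.

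Then I would integrate. As $X/\Gamma$ is compact, $c_\lambda(X/\Gamma)=\kappa_\lambda\,\mathrm{Vol}(X/\Gamma)$ and $c_\lambda(\widetilde{X})=\kappa_\lambda'\,\mathrm{Vol}(\widetilde{X})$, whence $c_\lambda(X/\Gamma)=\mu\cdot c_\lambda(\widetilde{X})$ with $\mu=(-1)^{n}\,\mathrm{Vol}(X/\Gamma)/\mathrm{Vol}(\widetilde{X})$ a single constant independent of $\lambda$. To pin down $\mu$, apply this identity to the weight-$n$ part of the Todd polynomial: writing $\mathrm{Td}_n=\sum_\lambda t_\lambda c_\lambda$ with universal rational coefficients $t_\lambda$, integration gives $\chi^0(X/\Gamma)=\mu\cdot\chi^0(\widetilde{X})$; but $\widetilde{X}$ is a rational (indeed Fano) manifold, so $H^q(\widetilde{X},\mathcal{O})=0$ for $q>0$ by Kodaira vanishing and hence $\chi^0(\widetilde{X})=1$. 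Thus $\mu=\chi^0(X/\Gamma)$, which is exactly the asserted proportionality $c_\lambda(X/\Gamma)=\chi^0(X/\Gamma)\cdot c_\lambda(\widetilde{X})$. The displayed consequence for $\chi_y$ then follows formally: by the Hirzebruch--Riemann--Roch formula (\ref{HRR}), for any compact complex $n$-fold $M$ one has $\chi_y(M)=\sum_\lambda P_\lambda(y)\,c_\lambda(M)$ with fixed $P_\lambda(y)\in\mathbb{Q}[y]$ depending only on $n$, and $X/\Gamma$ and $\widetilde{X}$ have the same dimension, so $\chi_y(X/\Gamma)=\chi^0(X/\Gamma)\cdot\chi_y(\widetilde{X})$.

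The hard part will be the second step: to justify carefully that the Chern--Weil forms of the locally symmetric metrics are genuinely $G$- (resp. $G_u$-) invariant, so that the one-dimensionality of invariant top-degree forms applies, and that passing to the compact dual negates the curvature and thereby contributes precisely a sign depending only on $n$ — that is, the \emph{same} proportionality constant for \emph{all} partitions $\lambda$. Once that is in place, identifying the constant with $\chi^0(X/\Gamma)$ is the soft computation above (test against the Todd class, use $\chi^0(\widetilde{X})=1$). An alternative, equivalent route is Hirzebruch's original classifying-space argument, comparing the maps $X/\Gamma\simeq B\Gamma\to BK$ and $\widetilde{X}\to BK$ into the classifying space of the structure group $K$; but the Chern--Weil argument above is the most self-contained.
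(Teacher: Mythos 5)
The paper does not prove this statement at all: Theorem \ref{HPP1} is imported as a classical result with citations to Hirzebruch (\cite{Hi0}, \cite{Hi1}, \cite[\S 22.3]{Hi2}), so there is no in-paper argument to compare against. Your Chern--Weil argument is correct and is essentially Hirzebruch's original proof: both tangent bundles are associated to the same isotropy representation of $K$ on $\mathfrak{p}_+$, invariance forces each Chern form to be a constant multiple of the invariant volume form, duality of the symmetric pair negates the curvature so that the constant of proportionality is independent of the partition $\lambda$, and the constant is then identified by testing against the Todd class using $\chi^0(\widetilde{X})=1$ (valid since $\widetilde{X}$ is a rational homogeneous, hence Fano, manifold). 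Two small points deserve explicit care beyond what you flag. First, the clean relation $\kappa_\lambda=(-1)^n\kappa_\lambda'$ requires choosing the dual pair of invariant metrics induced by $\mp B|_{\mathfrak{p}}$ (Killing form), so that the volume elements at the base point match under $\mathfrak{p}\cong i\mathfrak{p}$; with an arbitrary pair of invariant K\"ahler metrics the ratio $\kappa_\lambda/\kappa_\lambda'$ is still independent of $\lambda$ only after this normalization (or one should note that for irreducible factors any two invariant metrics differ by scaling, which rescales $c_\lambda$-forms trivially since Chern forms are scale-invariant -- either remark closes the gap, and in any case only the $\lambda$-independence of the ratio is used, since $\mu$ is ultimately pinned down by the Todd class). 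Second, one should say why Chern--Weil for the \emph{holomorphic} tangent bundle computes the same forms as the Riemannian curvature you are dualizing: this is because the metrics are K\"ahler, so the Chern connection coincides with the Levi-Civita connection on $T^{1,0}$. With these two points made precise, your proof is complete and self-contained, which is more than the paper offers.
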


What we need in the proof of Theorem \ref{theorem1} is only a very special case of Theorem \ref{HPP1}, which we record in the following example.
\begin{example}\label{example}
Take the bounded homogeneous symmetric domain $X=\mathbb{B}^n$, the unit ball in $\mathbb{C}^n$. Then its compact dual is $\widetilde{X}=\mathbb{C}P^n$ and the proportionality factor $\chi^0(\mathbb{B}^n/\Gamma)=(-1)^n$ by our assumption in Theorem \ref{theorem1}. Therefore (\ref{quotient}) implies that
\be\chi_y(\mathbb{B}^n/\Gamma)=(-1)^n\cdot\chi_y(\mathbb{C}P^n),\nonumber\ee
\end{example}
and consequently by Proposition \ref{ch} we have
\be\label{example2}K_j(\mathbb{B}^n/\Gamma)
=(-1)^nK_j(\mathbb{C}P^n).\ee

\section{$L^2$-Hodge numbers and vanishing-type results}\label{section4}
In this section we briefly review the basic facts on $L^2$-Hodge numbers and indicate how to apply Atiyah's $L^2$-index theorem to obtain the relationship between $L^2$-Hodge numbers and the ordinary ones. The discussions here are sketchy and only for our later purpose. For a thorough treatment on these materials we refer the reader to the excellent book \cite{Lu}.

\subsection{$L^2$-Hodge numbers}
We assume throughout this subsection that $(M,g,J)$ is a compact complex $n$-dimensional manifold with a Hermitian metric $g$, and $$\pi:~(\widetilde{M},\widetilde{g},\widetilde{J})\longrightarrow
(M,g,J)$$
its universal covering with $\pi_1(M)$ as an isometric group of deck transformations.

Let $\mathcal{H}^{p,q}_{(2)}(\widetilde{M})$ be the spaces of $L^2$-harmonic $(p,q)$-forms on $L^2\Omega^{p,q}(\widetilde{M})$, the squared integrable $(p,q)$-forms on $(\widetilde{M},\widetilde{g})$, and denote by $$\text{dim}_{\pi_1(M)}\mathcal{H}^{p,q}_{(2)}(\widetilde{M})$$
the \emph{Von Neumann dimension} of $\mathcal{H}^{p,q}_{(2)}(\widetilde{M})$ with respect to $\pi_1(M)$, which is a \emph{nonnegative real number} in our situation. Its precise definition is not important in our article but only the following two basic facts are needed.
\begin{lemma}
\be\label{fact1}\text{dim}_{\pi_1(M)}\mathcal{H}^{p,q}_{(2)}(\widetilde{M})=0 \Longleftrightarrow\mathcal{H}^{p,q}_{(2)}(\widetilde{M})=\{0\},\ee
and $\text{dim}_{\pi_1(M)}(\cdot)$ is additive:
\be\label{fact2}\text{$\text{dim}_{\pi_1(M)}(A\oplus B)=\text{dim}_{\pi_1(M)}A+\text{dim}_{\pi_1(M)}B$}.\ee
\end{lemma}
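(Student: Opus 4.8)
The plan is to unwind the definition of the Von Neumann dimension as the trace of an orthogonal projection in a finite von Neumann algebra, and then read off both assertions from standard properties of that trace. Write $\Gamma:=\pi_1(M)$; it acts on $\widetilde{M}$ by deck transformations and hence unitarily on $L^2\Omega^{p,q}(\widetilde{M})$, commuting with the Hodge Laplacian. First I would fix a relatively compact measurable fundamental domain $F\subset\widetilde{M}$ for the $\Gamma$-action together with an orthonormal frame of $\Lambda^{p,q}T^{\ast}\widetilde{M}$ over a neighborhood of $F$; this produces a $\Gamma$-equivariant unitary isomorphism $L^2\Omega^{p,q}(\widetilde{M})\cong\ell^2(\Gamma)\otimes\mathbb{C}^{k}$ with $k=\binom{n}{p}\binom{n}{q}$, exhibiting it as a free, finitely generated Hilbert $\mathcal{N}(\Gamma)$-module. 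Any $\Gamma$-invariant closed subspace $V$ --- in particular $\mathcal{H}^{p,q}_{(2)}(\widetilde{M})$, which is closed and $\Gamma$-invariant since it is the kernel of the $\Gamma$-equivariant Laplacian --- has orthogonal projection $P_{V}$ lying in the commutant of the $\Gamma$-action, namely $M_{k}(\mathcal{N}(\Gamma))$, a finite von Neumann algebra carrying a faithful, normal, positive trace $\mathrm{Tr}$; by definition $\dim_{\Gamma}V=\mathrm{Tr}(P_{V})$.

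Granting this set-up, (\ref{fact1}) follows at once: $P_{V}$ is a positive element of $M_{k}(\mathcal{N}(\Gamma))$, so faithfulness of $\mathrm{Tr}$ gives $\mathrm{Tr}(P_{V})=0\iff P_{V}=0\iff V=\{0\}$.

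For (\ref{fact2}) I would first treat the orthogonal case, which is the \emph{only} situation actually needed in the sequel: when $A$ and $B$ are mutually orthogonal $\Gamma$-invariant closed subspaces of one finitely generated Hilbert $\mathcal{N}(\Gamma)$-module --- as happens for $A=\bigoplus_{q\text{ even}}\mathcal{H}^{p,q}_{(2)}(\widetilde{M})$ and $B=\bigoplus_{q\text{ odd}}\mathcal{H}^{p,q}_{(2)}(\widetilde{M})$ sitting inside $L^2\Omega^{p,\bullet}(\widetilde{M})$ --- one has $P_{A\oplus B}=P_{A}+P_{B}$, and additivity is nothing but linearity of $\mathrm{Tr}$. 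To obtain the statement for an abstract internal direct sum, replace $B$ by $B':=$ its image under orthogonal projection onto $A^{\perp}$; the induced map $B\to B'$ is a $\Gamma$-equivariant bounded bijection with bounded inverse, so by polar decomposition $P_{B}$ and $P_{B'}$ are Murray--von Neumann equivalent in the ambient von Neumann algebra, whence $\mathrm{Tr}(P_{B})=\mathrm{Tr}(P_{B'})$; since the orthogonal direct sum of $A$ and $B'$ has the same closure as $A\oplus B$, one reduces to the orthogonal case.

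The one part that takes genuine work is the first paragraph --- realizing $L^2\Omega^{p,q}(\widetilde{M})$ as a finitely generated Hilbert $\mathcal{N}(\Gamma)$-module and identifying the relevant commutant as a finite von Neumann algebra with a faithful normal trace; this is exactly the framework underlying Atiyah's $L^2$-index theorem used later in this section, and once it is in place both (\ref{fact1}) and (\ref{fact2}) are formal. Accordingly I would state the lemma as recorded and refer the reader to \cite{Lu} for the complete development, using these two facts freely in what follows.
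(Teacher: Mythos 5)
The paper offers no proof of this lemma at all: it is stated as one of the ``basic facts'' about the Von Neumann dimension, whose ``precise definition is not important in our article,'' with the reader pointed to L\"uck's book \cite{Lu}. So your write-up, which actually sketches the standard argument (faithfulness of the trace for (\ref{fact1}), linearity of the trace plus Murray--von Neumann equivalence for (\ref{fact2})), already does more than the paper, and the argument you give is the right one in spirit; your closing remark that one would in practice just cite \cite{Lu} matches the paper's actual treatment.

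One technical claim in your setup is wrong, though, and you should fix it even in a sketch: $L^2\Omega^{p,q}(\widetilde{M})$ is \emph{not} a finitely generated free Hilbert $\mathcal{N}(\Gamma)$-module isomorphic to $\ell^2(\Gamma)\otimes\mathbb{C}^k$. A fundamental domain $F$ for the $\Gamma$-action carries the infinite-dimensional Hilbert space $L^2(F)\otimes\mathbb{C}^k$ of $L^2$ sections, and the tiling of $\widetilde{M}$ by translates of $F$ gives $L^2\Omega^{p,q}(\widetilde{M})\cong\ell^2(\Gamma)\otimes H$ with $H=L^2(F)\otimes\mathbb{C}^k$ infinite-dimensional. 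Consequently the relevant commutant is the \emph{semifinite} algebra $\mathcal{N}(\Gamma)\mathbin{\bar\otimes}B(H)$ with its faithful normal semifinite trace, not the finite algebra $M_k(\mathcal{N}(\Gamma))$. This does not damage the lemma --- faithfulness and additivity of the trace are exactly what (\ref{fact1}) and (\ref{fact2}) need, and both survive in the semifinite setting --- but it does mean that the finiteness of $\dim_{\Gamma}\mathcal{H}^{p,q}_{(2)}(\widetilde{M})$ (implicit in the paper's assertion that the dimension is a nonnegative real number) is not automatic from the module structure; it comes from elliptic regularity, namely that the projection onto the harmonic space has a smooth Schwartz kernel whose fiberwise trace is integrable over $F$. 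With that correction your proof is sound, and your observation that only the mutually orthogonal case of (\ref{fact2}) is ever used in the paper is accurate.
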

Then the $L^2$-Hodge numbers of $M$, denoted by $h^{p,q}_{(2)}(M)$, are defined to be
$$h^{p,q}_{(2)}(M):=\text{dim}_{\pi_1(M)}
\mathcal{H}^{p,q}_{(2)}(\widetilde{M})
\in\mathbb{R}_{\geq0},\qquad(0\leq p,q\leq n).$$

It turns out that $h^{p,q}_{(2)}(M)$ are independent of the Hermitian metric $g$ and depend only on $(M,J)$.

The Dolbeault-type operators $D_p$ in (\ref{GDC}) can be lifted to $(\widetilde{M},\widetilde{g},\widetilde{J})$: $$\widetilde{D_p}:~\bigoplus_{\text{$q$ even}}L^2\Omega^{p,q}(\widetilde{M})\longrightarrow\bigoplus_{\text{$q$ odd}}L^2\Omega^{p,q}(\widetilde{M}),$$ and one can define the \emph{$L^2$-index} of the lifted operators $\widetilde{D_p}$ by
\be
\begin{split}
\text{ind}_{\pi_1(M)}(\widetilde{D_p}):=&
\text{dim}_{\pi_1(M)}(\text{ker}\widetilde{D_p})-
\text{dim}_{\pi_1(M)}(\text{coker}\widetilde{D_p})\\
=&\text{dim}_{\pi_1(M)}\big[\bigoplus_{\text{$q$ even}}\mathcal{H}^{p,q}_{(2)}(\widetilde{M})\big]
-\text{dim}_{\pi_1(M)}\big[\bigoplus_{\text{$q$ odd}}\mathcal{H}^{p,q}_{(2)}(\widetilde{M})\big]\\
=&\sum_{\text{$q$ even}}\text{dim}_{\pi_1(M)}\mathcal{H}^{p,q}_{(2)}(\widetilde{M})
-\sum_{\text{$q$ odd}}\text{dim}_{\pi_1(M)}\mathcal{H}^{p,q}_{(2)}(\widetilde{M})\qquad\big(\text{by $(\ref{fact2})$}\big)\\
=&\sum_{q=0}^n(-1)^qh^{p,q}_{(2)}(M).
\end{split}\nonumber\ee

The celebrated $L^2$-index theorem of Atiyah (\cite{At}) asserts that
$$\text{ind}(D_p)=\text{ind}_{\pi_1(M)}(\widetilde{D_p})
$$
and so we have the following crucial identities between $\chi^p(M)$ and the $L^2$-Hodge numbers $h^{p,q}_{(2)}(M)$:
\be\label{factchiphodgenumbers}
\chi^p(M)=\sum_{q=0}^n(-1)^qh^{p,q}_{(2)}(M).\ee

\subsection{Vanishing and non-vanishing type results}
The following result is the main theorem in Gromov's seminal article \cite[p. 283]{Gr}.
\begin{theorem}[Gromov]\label{gromovtheorem}
Let $M$ be a complex $n$-dimensional K\"{a}hler hyperbolic manifold. Then the spaces of $L^2$-harmonic $(p,q)$-forms on its universal covering $\widetilde{M}$ satisfy
\begin{eqnarray}
\left\{ \begin{array}{ll}
\mathcal{H}^{p,q}_{(2)}(\widetilde{M})=\{0\},\qquad p+q\neq n,\\
\mathcal{H}^{p,q}_{(2)}(\widetilde{M})\neq\{0\},\qquad p+q=n,
\end{array} \right.\nonumber
\end{eqnarray}
which, via the fact (\ref{fact1}), is equivalent to
\begin{eqnarray}\label{gromovvanishing}
\left\{ \begin{array}{ll}
h^{p,q}_{(2)}(M)=0,\qquad p+q\neq n,\\
h^{p,q}_{(2)}(M)>0,\qquad p+q=n.
\end{array} \right.
\end{eqnarray}
\end{theorem}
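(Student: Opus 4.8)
The plan is to treat separately the vanishing assertion $\mathcal{H}^{p,q}_{(2)}(\widetilde{M})=\{0\}$ for $p+q\neq n$, which I would obtain by Gromov's Lefschetz argument built on $\widetilde{d}$-boundedness, and the non-vanishing for $p+q=n$, which I expect to be the hard part. Throughout write $\widetilde{\omega}:=\pi^{\ast}\omega$ for the pulled-back Kähler form on the complete Kähler manifold $(\widetilde{M},\widetilde{g})$. First I would record three standard facts. (i) On a complete manifold an $L^2$-form is $\Delta$-harmonic iff it is $d$-closed and $d^{\ast}$-closed, and an $L^2$-harmonic form equal to $d$ of an $L^2$-form vanishes (it is then $L^2$-orthogonal to itself); both use Gaffney-type cut-offs. (ii) The Lefschetz operator $L:=\widetilde{\omega}\wedge(\cdot)$ is a \emph{bounded} operator on $L^2\Omega^{\bullet}(\widetilde{M})$ (since $\lvert\widetilde{\omega}\rvert_{\widetilde{g}}$ is bounded, $\widetilde{\omega}$ being pulled back from the compact $M$) and commutes with $\Delta$ by the Kähler identity $[\Delta,L]=0$; together with (i) this shows $L$ maps $\mathcal{H}^{k}_{(2)}(\widetilde{M})$ into $\mathcal{H}^{k+2}_{(2)}(\widetilde{M})$. (iii) The purely linear-algebraic hard-Lefschetz fact that, pointwise, $L^{n-k}\colon\Lambda^{k}\to\Lambda^{2n-k}$ is injective for $k\leq n$.

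Granting these, let $\alpha\in\mathcal{H}^{k}_{(2)}(\widetilde{M})$ with $k<n$. Since $M$ is Kähler hyperbolic we may write $\widetilde{\omega}=d\widetilde{\eta}$ with $\lvert\widetilde{\eta}\rvert_{\widetilde{g}}$ bounded, and because $\alpha$ and $\widetilde{\omega}$ are closed one computes, for every $j\geq1$, $L^{j}\alpha=\widetilde{\omega}^{j}\wedge\alpha=d\bigl(\widetilde{\eta}\wedge\widetilde{\omega}^{j-1}\wedge\alpha\bigr)$, whose primitive $\widetilde{\eta}\wedge\widetilde{\omega}^{j-1}\wedge\alpha$ lies in $L^2$, being a bounded form wedged with $\alpha\in L^2$. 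By (ii) the form $L^{j}\alpha$ is again $L^2$-harmonic, hence by (i) it vanishes; taking $j=n-k\geq1$ and invoking (iii) forces $\alpha\equiv0$. For $k>n$ the Hodge star is a $\pi_1(M)$-equivariant $L^2$-isometry commuting with $\Delta$ which carries $\mathcal{H}^{k}_{(2)}(\widetilde{M})$ isomorphically onto $\mathcal{H}^{2n-k}_{(2)}(\widetilde{M})$, reducing to the case just done. Finally, $\widetilde{g}$ being Kähler the Laplacian preserves bidegree, so $\mathcal{H}^{k}_{(2)}(\widetilde{M})=\bigoplus_{p+q=k}\mathcal{H}^{p,q}_{(2)}(\widetilde{M})$, and the vanishing of the total-degree spaces gives $\mathcal{H}^{p,q}_{(2)}(\widetilde{M})=\{0\}$ for $p+q\neq n$.

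For the non-vanishing, combine Atiyah's $L^2$-index theorem in the form (\ref{factchiphodgenumbers}) with the vanishing just proved: only the $q=n-p$ term survives, so $h^{p,n-p}_{(2)}(M)=(-1)^{n-p}\chi^{p}(M)$, which is a \emph{non-negative integer} (being a Von Neumann dimension and also, up to sign, the integer $\chi^{p}(M)$), and it is positive precisely when $\chi^{p}(M)\neq0$. Thus everything reduces to showing $\chi^{p}(M)\neq0$ for $0\leq p\leq n$, i.e.\ to producing a single nonzero $L^2$-harmonic $(p,n-p)$-form on $\widetilde{M}$ for each $p$; this does not follow from the vanishing alone, which yields only $(-1)^{n}\chi(M)=\sum_{p}h^{p,n-p}_{(2)}(M)\geq0$. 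Here I would use $\widetilde{d}$-boundedness a second time to create positivity: from $\widetilde{\omega}=d\widetilde{\eta}$ and the simple-connectedness of $\widetilde{M}$ one builds a $\pi_1(M)$-equivariant, topologically trivial Hermitian line bundle $\mathcal{L}_{t}$ on $\widetilde{M}$ with curvature the large multiple $t\widetilde{\omega}$; Andreotti--Vesentini/Hörmander $L^2$-vanishing on the complete $\widetilde{M}$ kills the higher $L^2$-cohomology of the $\bar{\partial}$-complex of $(p,\bullet)$-forms valued in $\mathcal{L}_{t}$ for $t\gg0$, and the $L^2$-index theorem equates the remaining index with $\int_{M}\mathrm{Td}(M)\,\mathrm{ch}(\Lambda^{p}T^{\ast}M)\,e^{t[\omega]}$, a degree-$n$ polynomial in $t$ with strictly positive leading coefficient $\binom{n}{p}\tfrac{1}{n!}\int_{M}\omega^{n}$, hence positive for $t\gg0$ — producing nonzero $L^2$-holomorphic data from which the needed harmonic class is recovered. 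I expect this last passage — turning the positivity supplied by the $\widetilde{d}$-bounded Kähler form into honest nonzero $L^2$-harmonic forms of \emph{every} bidegree $(p,n-p)$ — to be the main obstacle; it is precisely the part of Gromov's theorem beyond the soft Lefschetz computation, so in a full write-up one either reconstructs Gromov's argument or simply cites \cite{Gr}.
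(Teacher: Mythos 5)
This theorem is not proved in the paper: it is quoted from Gromov's article \cite{Gr}, and the remark immediately following it records exactly the division of labour you have rediscovered --- the vanishing for $p+q\neq n$ is the ``soft'' $L^2$-Lefschetz argument driven by the $d$-bounded primitive of $\pi^{\ast}\omega$, while the non-vanishing for $p+q=n$ is the hard part of \cite[p. 274-285]{Gr}. Your treatment of the vanishing half is correct and is precisely that argument: $L^{j}\alpha=d(\widetilde{\eta}\wedge\widetilde{\omega}^{j-1}\wedge\alpha)$ exhibits $L^{n-k}\alpha$ as $d$ of an $L^2$-form, harmonicity of $L^{n-k}\alpha$ together with Gaffney's integration by parts kills it, and pointwise hard Lefschetz injectivity then kills $\alpha$; the reduction of the case $k>n$ by the Hodge star and the bidegree splitting under the K\"ahler Laplacian are also fine.

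The non-vanishing half, however, has a genuine gap, which you partly acknowledge. Your reduction via (\ref{factchiphodgenumbers}) to showing $\chi^{p}(M)\neq 0$ for every $p$ is correct, but the proposed twisted-bundle scheme does not close the loop. First, for generic $t$ the class $t[\omega]$ is not integral, so the bundle on $\widetilde{M}$ with curvature $t\widetilde{\omega}$ admits no honest $\pi_1(M)$-equivariant structure (the deck group preserves $\widetilde{\omega}$ but moves $\widetilde{\eta}$ by exact forms, so the connection is only projectively equivariant), and Atiyah's theorem \cite{At} as stated applies only to lifts of operators defined on the compact quotient; repairing this needs either an integrality hypothesis or a projective extension of the $L^2$-index theorem. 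Second, and more seriously, even if the Andreotti--Vesentini vanishing plus index computation for $\Lambda^{p}T^{\ast}M\otimes\mathcal{L}_{t}$ went through, it would produce nonzero $L^2$-holomorphic $\mathcal{L}_{t}$-valued $(p,0)$-data; that is a different invariant from $\chi^{p}(M)$, and you give no mechanism for passing from it back to an untwisted harmonic $(p,n-p)$-form. Gromov's actual argument is different in kind: he deforms the complex by $d_{s}=d+s\,\widetilde{\eta}\wedge(\cdot)$, proves a uniform spectral lower bound for the deformed Laplacian away from the middle degree via a Bochner--Kodaira--Nakano identity, and exploits the resulting $\Gamma$-Fredholmness and the behaviour of the von Neumann index along the deformation. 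Since no short argument is available, your fallback of simply citing \cite{Gr} for the non-vanishing is exactly what the paper itself does, and is the right resolution; but as written, the positivity half of your proposal is not a proof.
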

\begin{remark}
The proof for the vanishing type results in the first situations $p+q\neq n$ is a direct application of the $L^2$ version's Lefschetz theorem and is not difficult (\cite[p. 273, 1.2.B]{Gr}), where the existence of a $d$-bounded K\"{a}hler form on $\widetilde{M}$ plays a dominant role. The real hard part is the non-vanishing results in the second situations $p+q=n$, where a careful analysis on the lower bound of the eigenvalues of the Laplacian on $L^2$-harmonic forms was carried out in \cite[p. 274-285]{Gr}.
\end{remark}
A direct consequence of Theorem \ref{gromovtheorem} is the solution of the Hopf conjecture in the K\"{a}hlerian case (\cite[p. 267]{Gr}):
\be\label{hopfconjecture2}
\begin{split}
(-1)^n\chi(M)=&(-1)^n\sum_{p}(-1)^p\chi^p(M)\\
=&\sum_ph^{p,n-p}_{(2)}(M)>0.\qquad\big(\text{by (\ref{factchiphodgenumbers}) and (\ref{gromovvanishing})}\big)
\end{split}\ee

By extending the arguments in the proof of Theorem \ref{gromovtheorem} in the first situations $p+q\neq n$, Cao-Xavier and Jost-Zuo independently obtained the following (\cite{CX}, \cite{JZ})
\begin{theorem}[Cao-Xavier, Jost-Zuo]\label{CaoXavierJostZuotheorem}
Let $M$ be a complex $n$-dimensional K\"{a}hler non-elliptic manifold. Then $\mathcal{H}^{p,q}_{(2)}(\widetilde{M})=\{0\}$ when $p+q\neq n$, i.e.,
\be\label{cxjz}h^{p,q}_{(2)}(M)=0,\qquad p+q\neq n.\ee
\end{theorem}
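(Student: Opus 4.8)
The plan is to extend Gromov's proof of the vanishing half of Theorem \ref{gromovtheorem} (the case $p+q\neq n$), replacing the boundedness of the primitive of the Kähler form by a cut-off argument that plays the sublinear growth against $L^2$-integrability. Write $\widetilde{\omega}=\pi^{\ast}\omega$ for the lifted Kähler form on the complete manifold $(\widetilde{M},\widetilde{g})$, let $L=\widetilde{\omega}\wedge(\cdot)$ be the Lefschetz operator, $\Lambda$ its adjoint, and $\Delta$ the form Laplacian. Since the Kähler identities are local they persist on $\widetilde{M}$, so $L$ and $\Lambda$ commute with $\Delta$; hence $L$ preserves the spaces $\mathcal{H}^{p,q}_{(2)}(\widetilde{M})$ of $L^2$-harmonic forms, and these carry the usual $\mathfrak{sl}_2$-action. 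By exactly the linear algebra used in the compact hard-Lefschetz theorem, the raising operator is injective in low degree:
\[
L:\ \mathcal{H}^{p,q}_{(2)}(\widetilde{M})\longrightarrow\mathcal{H}^{p+1,q+1}_{(2)}(\widetilde{M})\quad\text{is injective whenever }p+q<n.
\]
Thus it suffices to prove that $L$ acts as zero on every $L^2$-harmonic space, after which injectivity forces $\mathcal{H}^{p,q}_{(2)}(\widetilde{M})=\{0\}$ for $p+q<n$.

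Next I would fix $\alpha\in\mathcal{H}^{p,q}_{(2)}(\widetilde{M})$ with $p+q<n$. Because $\widetilde{M}$ is complete, $\alpha$ is both closed and co-closed, and $L\alpha=\widetilde{\omega}\wedge\alpha$ is again $L^2$-harmonic since $L$ commutes with $\Delta$ and $\widetilde{\omega}$ is bounded. The $\widetilde{d}$-sublinearity hypothesis gives $\widetilde{\omega}=d\beta$ with $|\beta(x)|\le c[1+\rho(x,x_0)]$, and since $d\alpha=0$ we have $L\alpha=d(\beta\wedge\alpha)$. The form $\beta\wedge\alpha$ need not lie in $L^2$, so I would introduce cut-off functions $\phi_R$ equal to $1$ on $B_R(x_0)$, supported in $B_{2R}(x_0)$, with $|d\phi_R|\le C/R$, and set $\eta_R:=\phi_R\,\beta\wedge\alpha$, which is compactly supported. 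Then
\[
d\eta_R=\phi_R\,L\alpha+d\phi_R\wedge\beta\wedge\alpha,
\]
where the first term converges to $L\alpha$ in $L^2$ by dominated convergence.

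The heart of the matter, and the only place where the non-elliptic (rather than hyperbolic) hypothesis enters, is the error term $d\phi_R\wedge\beta\wedge\alpha$. On the annulus $B_{2R}\setminus B_R$ one has $|\beta|\le c(1+2R)$, so the sublinear growth contributes a factor $O(R^2)$ to $|\beta|^2$, which is precisely cancelled by $|d\phi_R|^2\le C^2/R^2$:
\[
\|d\phi_R\wedge\beta\wedge\alpha\|_{L^2}^2\le\frac{C^2}{R^2}\int_{B_{2R}\setminus B_R}|\beta|^2|\alpha|^2\,dV\le C'\int_{B_{2R}\setminus B_R}|\alpha|^2\,dV\xrightarrow[R\to\infty]{}0,
\]
the final limit holding because $\alpha\in L^2$. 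Consequently $L\alpha=\lim_R d\eta_R$ is an $L^2$-limit of differentials of compactly supported forms, hence lies in $\overline{\mathrm{im}\,d}$, which is orthogonal to the harmonic space in the $L^2$-Hodge decomposition. Being simultaneously harmonic and orthogonal to all harmonic forms, $L\alpha=0$; injectivity then yields $\alpha=0$, so $\mathcal{H}^{p,q}_{(2)}(\widetilde{M})=\{0\}$ and, via (\ref{fact1}), $h^{p,q}_{(2)}(M)=0$ for $p+q<n$.

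Finally the range $p+q>n$ follows by duality: complex conjugation gives $h^{p,q}_{(2)}=h^{q,p}_{(2)}$ and the $L^2$-Hodge star gives $h^{p,q}_{(2)}=h^{n-p,n-q}_{(2)}$, so any pair with $p+q>n$ is matched to one with $(n-p)+(n-q)<n$, already handled. I expect the growth estimate in the previous paragraph to be the main obstacle: everything else is Gromov's $\mathfrak{sl}_2$/hard-Lefschetz machinery used verbatim, whereas the genuinely new point is that the $R^{-2}$ decay of $|d\phi_R|^2$ exactly absorbs the $R^2$ growth of $|\beta|^2$ against the $L^2$-tail of $\alpha$, which is what allows the bounded-primitive argument of the hyperbolic case to survive under mere sublinear control.
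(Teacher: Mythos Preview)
The paper does not supply its own proof of this theorem; it is quoted as a background result with attribution to Cao--Xavier \cite{CX} and Jost--Zuo \cite{JZ}. Your proposal is essentially the argument found in those papers: the pointwise $\mathfrak{sl}_2$-action makes $L$ injective below the middle degree, and the cut-off estimate---where the $R^{-2}$ from $|d\phi_R|^2$ exactly cancels the $O(R^2)$ growth of $|\beta|^2$, leaving only the $L^2$-tail of $\alpha$ over the annulus---is precisely the mechanism by which Cao--Xavier and Jost--Zuo upgrade Gromov's $d$-bounded argument to the $\widetilde{d}$-sublinear setting. So your outline is correct and matches the original sources; there is simply nothing in the present paper to compare it against.
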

This implies from (\ref{hopfconjecture2}) that $(-1)^n\chi(M)\geq0$ and thus settles the nonnegative version's Hopf conjecture in the K\"{a}hlerian case.

\section{Proofs of main results}\label{section5}
With the background materials prepared in Sections \ref{section3} and \ref{section4}, we are ready to prove our main results in this section.
\subsection{Proofs of Theorems \ref{theorem1} and \ref{theorem2}}
In this subsection we mainly show Theorem \ref{theorem1}, from whose process Theorem \ref{theorem2} follows easily.

Assume now that $M$ is a complex $n$-dimensional K\"{a}hler hyperbolic manifold. Then
\be\label{1}
\begin{split}
\chi^p(M)=&\sum_{q=0}^n(-1)^qh^{p,q}_{(2)}(M)\qquad\big(\text{by (\ref{factchiphodgenumbers})}\big)\\
=&(-1)^{n-p}h^{p,n-p}_{(2)}(M).\qquad\big(\text{by (\ref{gromovvanishing})}\big)
\end{split}\ee

Note that $\chi^p(M)$ is by definition an integer. On the other hand,  we know from (\ref{gromovvanishing}) that $h^{p,n-p}_{(2)}(M)$ is a
\emph{positive} real number. Therefore the equality (\ref{1}) implies that $h^{p,n-p}_{(2)}(M)$ is indeed a positive integer and thus
\be\label{2}h^{p,n-p}_{(2)}(M)\geq1,\qquad 0\leq p
\leq n.\ee

Still following the notation in (\ref{chiy-1}), we have
\be\label{3}
\begin{split}
(-1)^n\sum_{j=0}^nK_j(M)\cdot(y+1)^j=&
(-1)^n\chi_y(M)\\
=&(-1)^n\sum_{p=0}^n\chi^p(M)\cdot y^p\\
=&\sum_{p=0}^nh^{p,n-p}_{(2)}(M)\cdot(-y)^p.\qquad\big(\text{by (\ref{1})}\big)
\end{split}
\ee

Now comparing the coefficients of the Taylor expansion at $y=-1$ on both sides of (\ref{3}) yields
\be\label{4}
\begin{split}
(-1)^nK_j(M)=&\frac{\Big[\sum_{p=0}^nh^{p,n-p}_{(2)}(M)(-y)^p\Big]^{(j)}}
{j!}\Big|_{y=-1}\qquad(0!:=1)\\
=&(-1)^j\sum_{p=j}^n{p\choose j}h^{p,n-p}_{(2)}(M).
\end{split}
\ee

This implies that
\be\label{5}
\begin{split}
(-1)^{n+j}K_{j}(M)=&\sum_{p=j}^n{p\choose j}h^{p,n-p}_{(2)}(M)\\
\geq&\sum_{p=j}^n{p\choose j}\qquad\big(\text{by (\ref{2})}\big)\\
=&(-1)^j\frac{\Big[\sum_{p=0}^n(-y)^p\Big]^{(j)}}
{j!}\Big|_{y=-1}\\
=&(-1)^j\frac{\Big[\chi_y(\mathbb{C}P^n)\Big]^{(j)}}
{j!}\Big|_{y=-1}\\
=&(-1)^jK_j(\mathbb{C}P^n).
\end{split}
\ee

Now we define
$$A_i(c_1,\ldots,c_n)[M]:=(-1)^nK_{2i}(M),\qquad 0\leq i\leq[\frac{n}{2}].$$
Then it follows from (\ref{5}) that
\be
\begin{split}
A_i(c_1,\ldots,c_n)[M]\geq&(-1)^nA_{i}
(c_1,\ldots,c_n)[\mathbb{C}P^n]\\
=&(-1)^nA_i\Big({n+1\choose 1},\ldots,{n+1\choose n}\Big),\qquad 0\leq i\leq[\frac{n}{2}],\end{split}\nonumber\ee
which produce the desired Chern number inequalities (\ref{chernnumberinequality}) and, together with Proposition \ref{ch}, the formulas for the first three terms in Theorem \ref{theorem1}.

Clearly the equality case in (\ref{5}) holds if and only if
$$h^{p,n-p}_{(2)}(M)=1,\qquad j\leq p\leq n,$$
which, via (\ref{1}), is equivalent to
\be\label{6}\chi^p(M)=(-1)^{n-p},\qquad j\leq p\leq n,\ee
which precisely give the equality characterization (\ref{equalitycase1}) in Theorem \ref{theorem1}. Also note that, if $j\leq[\frac{n+1}{2}]$, the relations $\chi^p=(-1)^n\chi^{n-p}$ in (\ref{chiprelation}) tell us that the $n-j+1$ equalities in (\ref{6}) indeed are equivalent to $\chi^p(M)=(-1)^{n-p}$ for \emph{all} $p$, i.e.,
$$\chi_y(M)=(-1)^n\sum_{p=0}^n(-y)^n=(-1)^n\chi_y(\mathbb{C}P^n).$$
This gives the desired equality characterizations in (\ref{equalitycase2}) as $2i\leq[\frac{n+1}{2}]$ is equivalent to $i\leq[\frac{n+1}{4}]$.

In order to complete the proof of Theorem \ref{theorem1}, it suffices to show that the equality cases in (\ref{chernnumberinequality}) can be realized by some compact quotients of the unit ball in $\mathbb{C}^n$. But it has been done via (\ref{example2}) by applying the Hirzebruch's proportionality principle in Example \ref{example}. This completes the proof of Theorem \ref{theorem1}.

The proof above can be completely carried over to show Theorem \ref{theorem2} for K\"{a}hler non-elliptic manifolds by applying the vanishing-type results (\ref{cxjz}) in Theorem \ref{CaoXavierJostZuotheorem}. The only difference is that in this case the conditions in (\ref{2}) are unavailable and so accordingly the inequality (\ref{5}) has to be weakened to
$$(-1)^{n+j}K_j(M)\geq0,$$
which lead to the desired (\ref{nonelliptic}).

\subsection{Proof of Theorem \ref{theorem3}}
Let us complete this article by proving Theorem \ref{theorem3} in this last subsection.

It is well-known, by combining the Kodaira vanishing theorem and the Hirzebruch-Riemann-Roch theorem, that a projective manifold with ample canonical bundle is of general type. Conversely, the canonical bundle of a projective manifold of general type may not be ample. The following fact says that it is the case if an extra condition is assumed.
\begin{lemma}\label{nonstandardlemma}
If a projective manifold of general type contains no rational curves, then its canonical bundle is ample.
\end{lemma}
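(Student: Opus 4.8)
The plan is to invoke the Minimal Model Program, specifically the base-point-free theorem together with the cone theorem of Mori--Kawamata--Shokurov--Reid. Since $M$ is a projective manifold of general type, its canonical bundle $K_M$ is big and nef after running the MMP — but more directly, one knows $K_M$ is semiample once it is nef, so the first reduction is to show $K_M$ is nef. Suppose not; then by the cone theorem the Mori cone $\overline{NE}(M)$ has a $K_M$-negative extremal ray $R$, and the associated contraction theorem produces a rational curve $C$ on $M$ with $K_M\cdot C<0$. In fact the cone theorem guarantees each $K_M$-negative extremal ray is spanned by the class of a rational curve (with bounded anticanonical degree $0<-K_M\cdot C\leq n+1$). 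This contradicts the hypothesis that $M$ contains no rational curves. Hence $K_M$ is nef.

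Next I would upgrade nefness to ampleness. Because $M$ is of general type, $K_M$ is big; a nef and big divisor is semiample by the base-point-free theorem (Kawamata--Shokurov), so $|mK_M|$ is base-point-free for $m\gg 0$ and defines a morphism $\phi\colon M\to M'$ onto a normal projective variety with $K_M=\phi^\ast H$ for some ample $H$ on $M'$. It remains to see $\phi$ is an isomorphism, i.e.\ has no positive-dimensional fibers. If some fiber $F$ had $\dim F>0$, then $F$ would be a positive-dimensional projective variety on which $K_M|_F=\phi^\ast H|_F$ is trivial, hence $F$ is covered by curves $C$ with $K_M\cdot C=0$. One then shows such an $F$ must contain a rational curve: a positive-dimensional fiber of a birational contraction morphism is always uniruled — this follows from the standard fact (via Mori's bend-and-break, or from the structure of extremal contractions) that exceptional loci of $K_M$-trivial contractions of big classes are covered by rational curves. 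Again this contradicts the no-rational-curves hypothesis, so $\phi$ is finite and birational, hence an isomorphism by Zariski's main theorem, and $K_M\cong\phi^\ast H$ is ample.

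Alternatively, and perhaps more cleanly, once $K_M$ is known to be nef and big one can appeal directly to the basepoint-free contraction: $K_M$ is ample if and only if $K_M\cdot C>0$ for every curve $C$ (Nakai--Moishezon, or Kleiman's criterion applied to the nef class). A curve $C$ with $K_M\cdot C=0$ must lie in the exceptional locus of the semiample morphism $\phi$ above, and that locus is uniruled; producing the rational curve inside it is the one genuinely nontrivial input. So the \textbf{main obstacle} is exactly this step: showing that the locus where a nef and big canonical class fails to be strictly positive is covered by rational curves. This is a consequence of the cone theorem / Kawamata's rationality and base-point-free theorems from the MMP, and I would cite those rather than reprove them; the rest of the argument is then a short chain of standard implications (nef $+$ big $\Rightarrow$ semiample $\Rightarrow$ ample once the contraction is finite).
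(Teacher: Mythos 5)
Your argument is correct and is essentially the argument the paper delegates to its references: the paper's ``proof'' of this lemma is just a citation to Debarre's exercise and to Chen--Yang's Theorem 2.11, both of which run exactly your chain (cone theorem $\Rightarrow$ $K_M$ nef since a $K_M$-negative extremal ray is spanned by a rational curve; base-point-free theorem $\Rightarrow$ $K_M$ semiample, giving the birational morphism $\phi$ with $K_M=\phi^{\ast}H$; Kawamata's theorem on lengths of extremal rays $\Rightarrow$ positive-dimensional fibers of the $K_M$-trivial contraction are covered by rational curves). Do keep the qualification you added in passing: a positive-dimensional fiber of an arbitrary birational contraction need not be uniruled (e.g.\ the resolution of the cone over an elliptic curve contracts an elliptic curve), so the uniruledness genuinely uses that $-K_M$ is $\phi$-nef, which holds here because $K_M$ is pulled back from the target.
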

\begin{proof}
This fact should be well-known to experts. For example, this was listed in \cite[p. 219]{De} as an exercise with hints and the details were carried out in the proof in \cite[Thm 2.11]{CY}.
\end{proof}

With this lemma in hand, we now proceed to prove Theorem \ref{theorem3}.
\begin{proof}
First note that the manifold $M$ in question is projective. Indeed, $M$ being of general type implies that its canonical bundle is big and so $M$ is Moishezon (\cite[p. 88]{MM}). Together with the K\"{a}hlerness condition we conclude from Moishezon's theorem that $M$ is projective (cf. \cite[p. 95]{MM}).

In view of Lemma \ref{nonstandardlemma}, it now suffices to show that $M$ contains no rational curves. The following arguments are parallel to those in \cite[Thm 2.11]{CY}.

Since $M$ is K\"{a}hler exact, there exists a K\"{a}hler form $\omega$ on it such that $\pi^{\ast}\omega=d\beta$ for some $1$-form $\beta$ on $\widetilde{M}$. Assume that $f:~\mathbb{C}P^1\longrightarrow M$ is a holomorphic map and we want to show that $f$ is a constant map, i.e., $f^{\ast}(\omega)\equiv0$. Let $\pi:~\widetilde{M}\longrightarrow M$ be the universal covering. Due to the simple-connectedness of $\mathbb{C}P^1$ the map $f$ admits a lifting $\widetilde{f}$ to $\widetilde{M}$,
$$\xymatrix{
                &         \widetilde{M} \ar[d]^{\pi}     \\
\mathbb{C}P^1 \ar[ur]^{\widetilde{f}} \ar[r]_{f} & M, }$$
i.e., $f=\pi\circ\widetilde{f}$. Therefore
\be\int_{\mathbb{C}P^1}f^{\ast}\omega=
\int_{\mathbb{C}P^1}(\pi\circ\widetilde{f})^{\ast}(\omega)=
\int_{\mathbb{C}P^1}\widetilde{f}^{\ast}(\pi^{\ast}\omega)
=\int_{\mathbb{C}P^1}\widetilde{f}^{\ast}(d\beta)
=\int_{\mathbb{C}P^1}d(\widetilde{f}^{\ast}\beta)=0.\nonumber\ee
This means that $f^{\ast}(\omega)\equiv0$ and so $f$ is a constant map, which completes the proof of Theorem \ref{theorem3} and this article.
\end{proof}

\end{document}